\numberwithin{equation}{section}
\newtheorem{theorem}{Theorem}[section]
\newtheorem{proposition}[theorem]{Proposition}
\newtheorem{conjecture}[theorem]{Conjecture}
\newtheorem{corollary}[theorem]{Corollary}
\newtheorem{remark}[theorem]{Remark}
\newtheorem{lemma}[theorem]{Lemma}
\newtheorem{example}[theorem]{Example}
\newtheorem{definition}[theorem]{Definition}
\def\AA{\mathfrak{A}}
\def\ZZ{\mathbb{Z}}
\def\QQ{\mathbb{Q}}
\def\CC{\mathbb{C}}
\def\UU{\mathcal{U}}
\def\II{\mathcal{I}}
\def\1{\mathbb{1}}
\def\mm{\mathfrak{m}}
\begin{document}

\title{Poisson  and quantum geometry of acyclic cluster algebras}
\author{Sebastian Zwicknagl}
\maketitle

\begin{abstract}
   We prove that certain acyclic cluster algebras over the complex numbers are the coordinate rings of holomorphic symplectic manifolds. We also show that the corresponding quantum cluster algebras have no non-trivial prime ideals. This allows us to give evidence for a generalization of the conjectured variant of the orbit method for quantized coordinate rings and their classical limits. 
\end{abstract}

\tableofcontents
\section{Introduction}

In the present paper we investigate the Poisson geometry associated with cluster algebras over the complex numbers defined by acyclic quivers, and relate them to the ideal theory of the corresponding quantum cluster algebras. Our main motivation is the  following conjectural  analogue of Kirillov's Orbit method for quantized coordinate rings which has been an open problem for roughly twenty years (see e.g.~\cite{brown-goodearl} or \cite[Section 4.3]{Yak-spec} and \cite{Soi} for the case of compact quantum groups). Let $G$ be a semisimple complex algebraic group and $\CC[g]$ its coordinate ring  while $\CC_q[G]$ denote the corresponding quantized coordinate ring. It has been conjectured that there exists a  homeomorphism between the space of primitive ideals in $R_q[G]$ and the symplectic leaves of the standard Poisson structure on $G$. For an excellent introduction to this conjecture we refer the reader to Goodearl's paper \cite{Goo1}. The conjecture appears extremely difficult to prove and it is only known to be true in the cases of $G=SL_2, SL_3$. 

The coordinate rings $\CC[G]$ are known to have an upper cluster algebra structure (\cite{BFZ}) while the quantized coordinate rings are conjectured to have a quantum (upper) cluster algebra structure (\cite[Conjecture 10.10]{bz-qclust}). Indeed, it follows from recent results of  Gei\ss,  Leclerc and Schr\"oer (\cite[Section 12.4]{GLSq}) that  $\CC_q[SL_n]$ has a quantum cluster algebra structure.   Cluster algebras  are nowadays very well-established, hence we do not recall any of the definitions here, and refer the reader to the literature, resp.~our Section \ref{se:Cluster Algebras}. Most importantly for our purposes, a cluster algebra over $\CC$ is defined by a combinatorial datum in a field of fractions $\CC(x_1,\ldots, x_n)$.   We will denote this {\it initial seed} by $({\bf x}, B)$ where ${\bf x}=(x_1,\ldots,x_n)$ and $B$ is an integer  $m\times n$-matrix with $m\le n$ such that its principal $m\times m$ submatrix is skew-symmetrizable.  The cluster variables $x_{m+1},\ldots, x_n$ are the frozen variables which we will call coefficients.   A quantum cluster algebra is given by a {\it quantum seed} $( {\bf x}, B,\Lambda)$ where 
$(B,\Lambda)$ where $B$ is as above and $\Lambda$ is a skew-symmetric $n\times n$-matrix such that $(B,\Lambda)$ is a {\it compatible pair} (see Section \ref{se:Cluster Algebras} for details). The set ${\bf x}=(x_1,\ldots, x_n)$ now lives in the skew-field of fractions $\CC_\Lambda(x_1,\ldots,x_n)$ defined by $\Lambda$.   A compatible pair also defines a  compatible Poisson structure in the sense of \cite{GSV} on the cluster algebra given  by $({\bf x},B)$.  It is well-known  that the conjectured quantum cluster algebra structures on the rings $R_q[G]$ and the standard Poisson structure on $\CC[G]$ arise from such a compatible pair. Therefore,  we would like to suggest the following conjecture..

\begin{conjecture}
\label{conj: homeo cluster}
 Let $(B,\Lambda)$ be a compatible pair and let $\AA$ and $\AA_q$ be a cluster, resp.~quantum cluster algebra defined by $({\bf x},B)$, resp.~$({\bf x},B,\Lambda)$. Suppose further that $\AA$ and $\AA_q$ are Noetherian and that $\AA$ is the coordinate ring of the affine variety $X$. Then, there exists a homeomorphism between the space of primitive ideals of $\AA_q$ and the symplectic leaves on $X$ defined by $\Lambda$.
\end{conjecture}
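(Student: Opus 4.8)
\smallskip\noindent\textbf{Proof strategy.} The plan is to prove the homeomorphism by stratifying both sides by the action of a common algebraic torus and matching the strata, following the template of Goodearl--Letzter stratification theory that underlies the known cases of the orbit method (\cite{brown-goodearl}, \cite{Goo1}). The compatible pair $(B,\Lambda)$ endows $\AA_q$ with a $\ZZ^n$-grading, and hence with a rational action of the torus $H=(\CC^*)^n$ by algebra automorphisms rescaling the cluster variables; the same torus acts on $X$ (with $\AA=\CC[X]$) by Poisson automorphisms of the compatible structure determined by $\Lambda$ (\cite{GSV}). First I would verify that $\AA_q$, being Noetherian, satisfies the hypotheses of the Goodearl--Letzter stratification theorem, so that the primitive spectrum $\mathrm{Prim}\,\AA_q$ decomposes into finitely many $H$-strata indexed by the $H$-invariant prime ideals $J$, and that within each stratum the primitive ideals are parametrized by the maximal ideals of a commutative Laurent polynomial ring, i.e.\ by an algebraic torus $T_J$.

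Second, I would carry out the parallel analysis on the Poisson variety. The symplectic leaves of $X$ organize into the symplectic cores of the $H$-action, and within each core the leaves are again swept out by an algebraic torus. The crucial comparison step is to show that the indexing sets of the two stratifications coincide --- that each quantum $H$-prime $J$ corresponds to exactly one Poisson core $C_J\subseteq X$ --- and that the torus $T_J$ parametrizing primitives over $J$ is homeomorphic to the torus of leaves filling $C_J$. I would establish this by passing through the semiclassical limit: a suitable specialization of $\AA_q$ recovers $\AA$ together with its Poisson bracket, and a normal-element analysis of the localizations at $J$ should identify the two tori.

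Finally, with a stratum-by-stratum bijection in hand, I would promote it to a homeomorphism by checking compatibility with the closure relations of the respective topologies: the Jacobson closure of a family of primitive ideals must match the closure of the associated family of symplectic leaves. Because both stratifications are finite and each stratum is an irreducible torus, this reduces to a finite check on how the strata specialize to one another.

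I expect the comparison step of the second paragraph to be the main obstacle, and it is precisely the point at which the general orbit-method conjecture remains open: identifying the quantum $H$-primes with the Poisson cores, and matching the fine torus parametrizations, has no formal mechanism valid under the bare Noetherian hypothesis. This is why one should proceed by isolating a tractable case, such as the acyclic one, where the difficulty collapses --- if $\AA_q$ has no nontrivial prime ideals the quantum side reduces to a single stratum consisting of one primitive ideal, while a connected holomorphic symplectic $X$ is a single symplectic leaf, so the required homeomorphism degenerates to a trivial bijection of one-point spaces.
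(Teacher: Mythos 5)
The statement you were asked to prove is labeled a conjecture, and the paper does not prove it in general either: it only establishes the special acyclic case, and your proposal likewise does not close the general statement. Your first three paragraphs outline a Goodearl--Letzter-style program (common torus action, $H$-stratification of $\mathrm{Prim}\,\AA_q$, symplectic cores on the Poisson side, matching of strata via semiclassical limits), and you yourself concede in the final paragraph that the central comparison step --- identifying quantum $H$-primes with Poisson cores and matching the torus parametrizations --- ``has no formal mechanism valid under the bare Noetherian hypothesis.'' That concession is accurate, but it means the proposal is a research program with a genuine gap at its core, not a proof. A smaller technical point: even the stratification setup is not automatic, since a bare compatible pair does not obviously give a rational $H$-action on $\AA_q$ with finitely many $H$-primes; that too would need an argument.

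Your closing observation --- that in the acyclic case the conjecture degenerates because $\mathrm{Prim}\,\AA_q$ is a single point and $X$ is a single symplectic leaf --- is exactly the paper's actual route (Theorems \ref{th:Classical-intro} and \ref{th:quantum-intro}, proved as Theorems \ref{th:symplectic leaves} and \ref{th:ideals in qclalg}, combined in the final corollary). But you state both collapse results as hypotheses, whereas proving them is the entire content of the paper: on the classical side, Proposition \ref{pr:TPP super toric} (a nonzero Poisson ideal contains a monomial when $\mathrm{rank}\,\Lambda=n$), the acyclic ordering $b_{ij}>0$ together with condition \eqref{eq:poisson gen} forcing a contradiction (Theorem \ref{th:acyclic}), Polishchuk's result giving smoothness (Corollary \ref{co:acyclic smooth}), and an induction on vanishing coordinates using the Hamiltonian vector fields to show every leaf has full dimension (Theorem \ref{th:symplectic leaves}); on the quantum side, the lower-bound presentation from \cite{BFZ}/\cite{bz-qclust} and a lexicographic-minimality argument on monomials in a prime ideal (Theorem \ref{th:ideals in qclalg}). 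None of these ingredients appears in your proposal, so even restricted to the acyclic case it does not constitute a proof --- it correctly predicts the shape of the paper's argument but supplies none of its substance.
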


In light of Conjecture \ref{conj: homeo cluster}, we may think of quantum affine space and quantum tori as cluster algebras where all cluster variables are frozen. In this case the corresponding homeomorphism is well known and easy to construct. The other extreme case are cluster algebras without coefficients and here the class that is usually easiest to study are the acyclic cluster algebras. For example, it is known that such a cluster algebra is always Noetherian and the coordinate ring of an affine variety (see \cite{BFZ} and \cite{bz-qclust} for the classical and  quantum versions). It is our main objective to give evidence for Conjecture \ref{conj: homeo cluster} by proving  it in this very specific case. It is an immediate consequence of the following two main results.  
\begin{theorem}
\label{th:Classical-intro} 
Let  $\AA$ be a cluster algebra with initial seed $({\bf x}, B)$ defined by an acyclic quiver where $B$  is invertible satisfying \eqref{eq:poisson gen}, and  suppose that it is  the coordinate ring  of an affine variety $X$ and that $(B,\Lambda)$ is a compatible pairs. Then $X$ has the structure of a symplectic manifold, whose symplectic form is the corresponding Poisson bivector. 
\end{theorem}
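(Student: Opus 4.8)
The plan is to combine the explicit presentation of acyclic cluster algebras with a chart-by-chart analysis of the compatible Poisson structure. First observe that since the quiver is acyclic, $B$ is skew-symmetric, and an invertible skew-symmetric integer matrix necessarily has even size; hence $n=\dim X$ is even, as a symplectic structure requires. By \cite{GSV} the compatible pair $(B,\Lambda)$ equips $\AA$ with the log-canonical bracket $\{x_i,x_j\}=\omega_{ij}\,x_ix_j$ on the initial cluster, whose coefficient matrix $\Omega=(\omega_{ij})$ is invertible precisely because $B$, and therefore $\Lambda$, is invertible. On the open cluster torus $T\cong(\CC^{\ast})^n\subset X$ this bracket is, in logarithmic coordinates, a constant Poisson structure with matrix $\Omega$, so nondegeneracy of $\Omega$ makes $(T,\pi|_T)$ symplectic. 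It then remains to (i) prove $X$ is smooth and (ii) propagate nondegeneracy of the bivector $\pi$ from the dense torus to all of $X$.

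For (i) I would invoke the presentation of a coefficient-free acyclic cluster algebra from \cite{BFZ} (see also \cite{bz-qclust}): $\AA$ is generated by the $2n$ variables $x_1,\dots,x_n,x_1',\dots,x_n'$ of the initial cluster and its one-step mutations, subject to the $n$ exchange relations $f_i=x_ix_i'-M_i^{+}-M_i^{-}$, where $M_i^{\pm}$ are monomials in the unprimed variables. This realizes $X$ as a closed subvariety of $\CC^{2n}$ cut out by $n$ equations; since $\AA$ has Krull dimension $n$, it is a complete intersection. Smoothness is then checked by the Jacobian criterion: away from the vanishing loci of the $x_i$ the block $\partial f_i/\partial x_j'=x_i\delta_{ij}$ already has full rank $n$, while at points where some $x_i$ vanishes one must show that the remaining partial derivatives restore full rank. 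Carrying this out at the boundary strata is where the acyclicity of the quiver and the hypothesis \eqref{eq:poisson gen} on $B$ should be used.

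For (ii) the key structural fact, due to \cite{bz-qclust}, is that mutation sends a compatible pair to a compatible pair and preserves invertibility of both $B$ and $\Lambda$; thus every cluster carries a log-canonical bracket with invertible coefficient matrix, and the mutation maps are Poisson isomorphisms of the corresponding cluster tori. Consequently $\pi$ is nondegenerate at every point lying in some cluster torus. On the initial chart one computes $\pi^{\wedge n/2}=c\,\operatorname{Pf}(\Omega)\,\bigl(\prod_i x_i\bigr)\,\partial_{x_1}\wedge\cdots\wedge\partial_{x_n}$ for a nonzero constant $c$, which vanishes exactly along the hyperplanes $\{x_i=0\}$; nondegeneracy there genuinely forces passage to a neighboring cluster containing $x_i'$ in place of $x_i$. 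The main obstacle, and the heart of the matter, is therefore to show that the cluster tori cover $X$, equivalently that $\pi$ degenerates nowhere.

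I would attempt the covering statement by induction on the number of cluster variables vanishing at a point $p\in X$, mutating at an index $i$ with $x_i(p)=0$ to decrease it. The genuine difficulty — and where I expect \eqref{eq:poisson gen} and acyclicity to be essential — is the locus where a cluster variable and its mutation vanish simultaneously, $x_i(p)=x_i'(p)=0$, consistent with the exchange relation via $M_i^{+}(p)+M_i^{-}(p)=0$: such a point lies in neither the original nor the mutated torus, so the induction stalls. One must show either that \eqref{eq:poisson gen} excludes such simultaneous vanishing, or, failing that, verify smoothness and nondegeneracy directly at these strata by computing the restricted bivector on the complete intersection and checking that its Pfaffian against a local frame of $TX$ is a nonvanishing regular function. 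Once nondegeneracy holds at every point, $\omega=\pi^{-1}$ is a holomorphic $2$-form, closed by the Jacobi identity for $\pi$, and $(X,\omega)$ is the desired holomorphic symplectic manifold.
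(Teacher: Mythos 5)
Your reduction to (i) smoothness and (ii) nondegeneracy of the bivector is sound, and you have correctly located the hard case --- a point $p$ with $x_i(p)=x_i'(p)=0$, where the mutation induction stalls --- but you leave both critical steps open, so the proposal has genuine gaps exactly where the hypothesis \eqref{eq:poisson gen} must do its work. For (i), the Jacobian criterion at the boundary strata is asserted to be ``where acyclicity and \eqref{eq:poisson gen} should be used'' but is never carried out; the paper avoids this computation entirely. It first proves (Theorem \ref{th:acyclic}) that $\AA$ has \emph{no nontrivial Poisson prime ideals}: by Proposition \ref{pr:TPP super toric} a nonzero Poisson ideal contains a monomial (full rank of $\Lambda$), hence a Poisson prime contains some cluster variable $x_{i}$ with $i$ minimal; the exchange relation puts $P_{i}=m_{i}^{+}+m_{i}^{-}$ in the ideal, and $\{y_{i},x_{i}\}=\mu_1 m_{i}^{+}+\mu_2 m_{i}^{-}$ with $\mu_1\ne\mu_2$ by \eqref{eq:poisson gen} then forces the single monomial $m_{i}^{-}$ into the ideal --- but by acyclicity $m_{i}^{-}$ involves only variables $x_h$ with $h<i$, a contradiction. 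Smoothness (Corollary \ref{co:acyclic smooth}) then follows from Polishchuk's result \cite{Pol} that the singular locus lies in a Poisson ideal, which would be contained in a proper Poisson prime. This ideal-theoretic route replaces your unverified complete-intersection/Jacobian analysis.

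For (ii), your plan to show that the cluster tori cover $X$ is strictly stronger than what is needed, and you yourself observe it may fail at simultaneous vanishing; the paper never attempts it. Instead (Theorem \ref{th:symplectic leaves}) it argues leafwise: at a point $p$ with $x_i(p)=0$ and $x_j(p)\ne 0$ for all $j<i$, the exchange relation gives $m_{i}^{+}(p)=-m_{i}^{-}(p)$, so $\{x_i,y_i\}(p)=(\mu_1-\mu_2)\,m_{i}^{+}(p)$, which is nonzero because $\mu_1\ne\mu_2$ and $m_{i}^{-}$ involves only indices $<i$. Crucially, this holds \emph{whether or not} $y_i(p)=0$ --- precisely the case where your induction stalls: one does not need $p$ to lie in a mutated torus, only that the bivector pairs nontrivially with $dx_i$, which is read off from the regular function $\{x_i,y_i\}$. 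Hence the symplectic leaf through $p$ is transverse to $\{x_i=0\}$ and contains nearby points with one fewer vanishing coordinate; since the rank of the bivector is constant along a leaf, induction on $i$ shows every leaf has dimension $n$, and connectedness of $X$ (leaves of full dimension are open) forces a single leaf, i.e.\ global nondegeneracy. So the missing idea in your proposal is to replace the torus-covering statement by this transversality argument via $\{x_i,y_i\}(p)\ne 0$, which is exactly how \eqref{eq:poisson gen} and the acyclic ordering combine to dispose of the stratum $x_i(p)=y_i(p)=0$.
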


\begin{theorem}
\label{th:quantum-intro}
Let $\AA_q$ be a quantum cluster algebra with quantum seed  $( {\bf x}, B,\Lambda)$ satisfying the assumptions of Theorem \ref{th:Classical-intro}. Then $\{0\}$ is the only proper two sided prime ideal in $\AA_q$.
\end{theorem}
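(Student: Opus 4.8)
The plan is to reduce the statement to the simplicity of the initial quantum torus and then to eliminate the primes that do not survive the associated localization.

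First I would record that the compatibility of $(B,\Lambda)$ together with the invertibility of $B$ forces $\Lambda$ to be invertible: compatibility gives $B^{T}\Lambda = D$ for a diagonal matrix $D$ with positive entries, so $\det\Lambda \neq 0$. Consider the initial quantum torus $\TT = \CC_\Lambda[x_1^{\pm1},\ldots,x_n^{\pm1}]$ sitting inside $\CC_\Lambda(x_1,\ldots,x_n)$. Each cluster variable $x_i$ is a normal element of $\AA_q$ (it $q$-commutes with every other cluster variable through $\Lambda$), so the multiplicative set $S$ generated by $x_1,\ldots,x_n$ is an Ore set of normal, regular elements, and by the quantum Laurent phenomenon $\TT$ is precisely the Ore localization $\AA_q[S^{-1}]$.

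Next I would prove that $\TT$ is a simple ring. A Laurent monomial $x^{a}$ is central in $\TT$ exactly when $q^{(\Lambda a)_j}=1$ for all $j$; since we assume $q$ is not a root of unity and $\Lambda$ is invertible, this forces $a=0$, so the center of $\TT$ is $\CC$. By the theorem of McConnell and Pettit, a quantum torus with trivial center is simple, hence $\{0\}$ is the only proper prime of $\TT$. The localization correspondence then does most of the work: if $P$ is a prime of $\AA_q$ with $P\cap S=\emptyset$, its extension $P\TT$ is a prime of the simple ring $\TT$, so $P\TT=\{0\}$ and therefore $P = P\TT\cap\AA_q=\{0\}$. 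Consequently every nonzero prime of $\AA_q$ must meet $S$, and since the $x_i$ are normal and $P$ is prime, such a $P$ must contain one of the cluster variables $x_k$.

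It remains to show that a proper prime cannot contain a cluster variable; this is the step I expect to be the main obstacle, and it is where acyclicity and the genericity hypothesis \eqref{eq:poisson gen} enter. My approach would be to combine a mutate-and-relocalize argument with the exchange relations. If $P\ni x_k$ but $P$ avoids the Ore set of the seed obtained by mutating in direction $k$, then the quantum torus of that seed is again simple and the previous paragraph forces $P=\{0\}$, contradicting $x_k\neq 0$; thus $P$ would have to meet the Ore set of every seed, i.e.\ contain at least one cluster variable of each. To rule this out I would reduce the binomial exchange relation $x_k x_k' = M_+ + M_-$ modulo $P$: with $\overline{x_k}=0$ it yields $\overline{M_+}=-\overline{M_-}$, a relation between Laurent monomials in the remaining normal generators, and in the acyclic case one of these monomials is forced to be invertible in $\AA_q/P$. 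Propagating these relations through the seeds and invoking \eqref{eq:poisson gen} to exclude the resulting torsion relations among the $q$-powers, I expect to force $1\in P$, contradicting properness. The delicate point, and the crux of the whole argument, is precisely this last combinatorial control: showing that the genericity of the pair $(B,\Lambda)$ prevents any consistent system of monomial relations that would let a cluster variable lie in a proper prime.
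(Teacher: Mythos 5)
Your high-level strategy --- localize at the initial cluster to obtain the quantum torus $\TT$, invoke simplicity of $\TT$ (McConnell--Pettit) to show every nonzero prime meets the multiplicative set $S$, then rule out primes containing a cluster variable --- is a legitimate alternative in outline and genuinely different from the paper, which never localizes: Proposition \ref{pr:TPP super toric q} produces a monomial inside any nonzero two-sided ideal directly, by a commutator argument in $\AA_q$ using only $\mathrm{rank}(\Lambda)=n$. But your execution breaks at the hinge: the claim that each $x_i$ ``$q$-commutes with every other cluster variable'' and is therefore normal in $\AA_q$ is false. A cluster variable is normal in the quantum torus of any seed containing it, but not in $\AA_q$: already its exchange partner $y_i=x_i'$ satisfies $x_iy_i=m_i^++m_i^-$ while $y_ix_i=q^{a}m_i^++q^{b}m_i^-$ with $q^a\neq q^b$ --- condition \eqref{eq:poisson gen} is imposed precisely so that these two products are \emph{not} proportional --- hence $x_iy_i\neq q^{c}y_ix_i$ for any $c$. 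This invalidates both your justification of the Ore condition for $S$ (which is true, but needs an argument, e.g.\ via the lower-bound presentation) and, more seriously, the inference ``every nonzero prime meets $S$, hence contains some $x_k$'': extracting a single variable from a monomial lying in a prime is exactly where the work is when the generators are not normal. The paper does this using that $\AA_q$ equals its lower bound (\cite[Theorems 7.3 and 7.5]{bz-qclust}): writing an arbitrary $a\in\AA_q$ in the generators $x_j, y_j$ and using that $y_k$ skew-commutes with $x_\ell$ for $k\neq\ell$, it shows $x^{{\bf v}'}\,a\,x_i^{v_i}\in\II$ for all $a$, so that primeness together with lex-minimality of the monomial $x^{\bf v}\in\II$ yields a pure power $x_i^{v_i}\in\II$.

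The step you explicitly defer (``I expect to force $1\in P$'') is not a technicality but the core of the theorem, and your mutate-and-propagate scheme would have to control the Ore sets of infinitely many seeds; you do not carry it out. The paper's mechanism is local to the single acyclic seed and short: with $x_i^{v_i}\in\II$ lex-minimal, both $x_i^{v_i}y_i$ and $y_ix_i^{v_i}$ lie in $\II$, and by the exchange relation these equal $x_i^{v_i-1}$ times two non-proportional $\CC[q^{\pm1}]$-combinations of $m_i^+$ and $m_i^-$ (non-proportional again by \eqref{eq:poisson gen}); a suitable linear combination therefore places a single monomial $x_i^{v_i-1}m_i^{\pm}$ in $\II$, and by acyclicity ($b_{ih}<0$ forces $h<i$) the resulting monomial is strictly smaller in the chosen order --- contradicting minimality, with no global propagation through the exchange graph needed. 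Finally, note a hypothesis mismatch: your simplicity argument assumes scalars in a field with $q$ not a root of unity, whereas the paper's $\AA_q$ is a $\CC[t^{\pm1}]$-algebra; over $\CC[t^{\pm1}]$ the center of $\TT$ contains $\CC[t^{\pm1}]$ and $\TT$ is not simple (consider the ideal generated by $t-1$), so your route requires first fixing the ground field, a reduction you would need to make explicit.
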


Our approach, is similar to that of \cite{ZW tpc}, however all the proofs are self-contained and much easier, as our set-up is less general. The main idea is to study the intersection of ideals with the polynomial ring generated by a given cluster--in this case the acyclic seed.  We are able to derive rather strong conditions that Poisson  prime ideals--resp.~two-sided prime ideals in the quantum case-- must satisfy and are   able to show that no non-trivial   ideals satisfying them  exist. A straightforward argument, then allows us to conclude that the variety $X$ is a symplectic manifold. We should also remark that we do not know whether any acyclic cluster algebras exist that do not satisfy the assumptions made in Theorem \ref{th:Classical-intro}.

The paper is organized as follows. We first briefly recall the definitions of cluster algebras and compatible Poisson structures, compatible pairs and quantum cluster algebras  in Section \ref{se:Cluster Algebras}. Thereafter, we continue with some technical key propositions (Section \ref{se:key propositions}) and  discuss  in  Section \ref{se:Poisson and symp geom}    the symplectic geometry of acyclic cluster algebras. The proof of Conjecture \ref{conj: homeo cluster} in our specific case  is completed in Section   \ref{se:ideals in qca} by proving Theorem \ref{th:quantum-intro}.

\section{Cluster Algebras}
\label{se:Cluster Algebras}
\subsection{Cluster algebras}
 \label{se:Cluster Algebras-def}
 In this section, we will review the definitions and some basic results on
cluster algebras, or more precisely, on cluster algebras of geometric type over the field of complex numbers $\CC$. Denote by $\mathfrak{F}=\CC(x_1,\ldots,
x_n)$ the field of fractions  in $n$ indeterminates.  Recall that a $m
\times
m$-integer matrix $B'$ is called skew-symmetrizable if
there exists a $m \times m$-diagonal matrix  $D$ with positive integer entries  
such that  $B' \cdot D$ is skew-symmetric.
 Now, let $B$ be a $m\times n$-integer matrix such that its principal $m\times m$-submatrix is skew-symmetrizable. We call the tuple $(x_1,\ldots,x_n, B)$ the {\it initial seed} of the cluster
algebra and   $ (x_1,\ldots x_m)$ a cluster, while ${\bf x}=(x_1,\ldots x_n)$ is
called an extended cluster. The cluster variables $x_{m+1},\ldots,x_n$ are called {\it coefficients}. We will now construct more  clusters, $(y_1,\ldots,
y_m)$ and extended clusters ${\bf
y}=(y_1,\ldots, y_n)$, which are transcendence bases of $\mathfrak{F}$, and the
corresponding 
seeds $({\bf y}, \tilde B)$ in the following way.

Define for each real number $r$ the numbers $r^+={\rm max}(r,0)$ and  $r^-={\rm
min}(r,0)$.
Given a skew-symmetrizable integer $m \times n$-matrix $B$, we define for each
$1\le i\le m$
the {\it exchange polynomial}
\begin{equation}
 P_i = \prod_{k=1}^n x_k^{b_{ik}^+}+ \prod_{k=1}^n  x_k^{-b_{ik}^-}\ .
\end{equation}

We can now define the new cluster variable $x_i'\in\mathfrak{F}$ via the equation
\begin{equation}
\label{eq:exchange}
 x_ix_i'=P_i\ . 
\end{equation}

This allows us to refer to the matrix $B$ as the {\it exchange matrix} of the
cluster $(x_1,\ldots,x_n)$, and to the relations  defined by Equation \ref{eq:exchange} for $i=1,\ldots,m$ as {\it exchange relations}. 

We obtain that $(x_1,x_2,\ldots, \hat x_i,x_i',x_{i+1},\ldots, x_n)$ is a
transcendence basis of $\mathfrak{F}$. We next construct  the new exchange matrix
$B_i=B'=(b_{ij}')$, associated to the new (extended) cluster $${\bf x}_i=(x_1,x_2,\ldots, \hat
x_i,x_i',x_{i+1},\ldots, x_n)$$
via its coefficients $b_{ij}'$ as follows: 

$\bullet$ $b_{ij}' = -b_{ij}$ if $j \le n$ and $i = k$ or $j = k$,

$\bullet$ $b_{ij}' =  b_{ij} + \frac{|b_{ik} |b_{kj} + b_{ik} |b_{kj} |}{2}$ if
$j \le n$ and $i \ne k$ and $j \ne k$,

$\bullet$ $b_{ij}'=b_{ij}$ otherwise.

This algorithm is called  {\it matrix mutation}. Note that $B_i$ is again
skew-symmetrizable (see e.g.~\cite{FZI}). The process of obtaining a new seed is
called {\it cluster mutation}. The set of seeds obtained from a given seed $({\bf x},B)$ is  called the mutation equivalence class of  $({\bf x},B)$.

\begin{definition}
The cluster algebra $\mathfrak{A}\subset \mathfrak{F}$ corresponding to an
initial seed $(x_1,\ldots, x_n,B)$ is the subalgebra of $\mathfrak{F}$,
generated by the elements of all the clusters in the mutation equivalence class of $({\bf x},B)$ . We refer to the elements of the clusters as the {\it cluster variables}.
\end{definition}

\begin{remark}
Notice that  the coefficients, resp.~frozen variables $x_{m+1},\ldots, x_n$  
will never be mutated. Of course, that explains their name.
\end{remark}

We have the following fact, motivating the definition of cluster algebras in the
study of total positivity phenomena and canonical bases.

\begin{proposition} \cite[Section 3]{FZI}(Laurent phenomenon) Let $\mathfrak{A}$
be a cluster algebra with
initial extended cluster $(x_1,\ldots, x_n)$. Any cluster variable $x$ can be
expressed uniquely as a Laurent polynomial in the variables
$x_1,\ldots, x_n$ with integer coefficients.  
\end{proposition}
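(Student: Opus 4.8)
The plan is to separate the two assertions. Uniqueness is the easy half: since $x_1,\ldots,x_n$ are algebraically independent over $\CC$ (indeed over $\ZZ$), distinct Laurent monomials in them are linearly independent in $\F$, so any cluster variable can be written as an integer Laurent polynomial in at most one way. The substantive claim is existence, namely that every cluster variable lies in the Laurent ring $L=\ZZ[x_1^{\pm 1},\ldots,x_n^{\pm 1}]\subset\F$. I would organize the seeds along the $m$-regular tree $\mathbb{T}_m$ (recall that only the directions $1,\ldots,m$ are mutable) rooted at the initial seed $t_0=({\bf x},B)$, and prove by induction on the distance from $t_0$ that every cluster variable produced at a vertex $t$ belongs to $L$.

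The base case is trivial, and a single mutation is immediate: from the exchange relation $x_ix_i'=P_i$ together with $P_i\in\ZZ[x_1,\ldots,x_n]$ one gets $x_i'=P_i/x_i\in L$. Moreover the two monomials making up $P_i$ have disjoint supports, so $P_i$ is divisible by none of the $x_k$, and hence $x_i'=P_i/x_i$ is already in lowest terms. The real work is in iterating mutations, where each new exchange relation inverts a cluster variable that is itself only a Laurent polynomial, so denominators could a priori accumulate uncontrollably. The decisive structural input is that $L$ is a unique factorization domain, being a localization of the polynomial ring $\ZZ[x_1,\ldots,x_n]$. I would exploit this as follows: a cluster variable $z$ reached deep in the tree can always be approached through two adjacent clusters that overlap in $m-1$ mutable slots and differ only in one, say $x_k$ versus $x_k'=P_k/x_k$. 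By the inductive hypothesis $z$ is a Laurent polynomial in each of these two overlapping clusters; writing both expressions in lowest terms and comparing them in the UFD $L$ forces the denominator of $z$ to be a monomial in the initial variables, provided the relevant coprimalities are known.

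Establishing those coprimalities is where I expect the main difficulty to lie, and it is the technical core of \cite[Section 3]{FZI}. Concretely, one needs that $P_k$, expressed in the initial cluster, is not divisible by $x_k$, and that distinct cluster variables arising as candidate denominators are pairwise coprime in $L$; these facts let one intersect the two Laurent rings attached to the overlapping clusters and conclude that $z$ carries no spurious denominators. Following Fomin and Zelevinsky, I would reduce the general inductive step to a purely local configuration: a path in $\mathbb{T}_m$ alternating between two mutation directions $j$ and $k$ (the \emph{caterpillar}), so that the verification collapses to the rank-two situation, where the needed coprimality and denominator bounds can be checked by an explicit computation with the exchange relations. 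Once this local lemma is in place, the induction propagates it along arbitrary paths in the tree, yielding existence for all cluster variables and, together with the uniqueness observation above, completing the proof.
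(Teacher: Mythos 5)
The paper offers no proof of this proposition; it is quoted verbatim from \cite[Section 3]{FZI}, which is exactly the source your sketch reconstructs. Your outline --- easy uniqueness from algebraic independence, then existence by induction along the tree using that $\ZZ[x_1^{\pm 1},\ldots,x_n^{\pm 1}]$ is a UFD, the coprimality of exchange monomials and candidate denominators, and the caterpillar reduction to the rank-two computation --- is a faithful and correct summary of the Fomin--Zelevinsky argument, so it takes essentially the same approach as the cited proof.
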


Moreover, it has been conjectured for all cluster algebras, and proven in many cases (see
e.g.~\cite{MSW} and \cite{FST},\cite{FT})  that the coefficients of these polynomials are
positive.

Finally, we recall the definition of the lower bound of a cluster algebra $\AA$
corresponding to a seed $({\bf x}, B)$. Denote by $y_i$ for $1\le i\le m$ the
cluster variables obtained from ${\bf x}$ through mutation at $i$; i.e., they
satisfy the relation $x_iy_i=P_i$.
 \begin{definition}\cite[Definition 1.10]{BFZ}
\label{def:lower bounds}
Let $\AA$ be a cluster algebra and let $({\bf x}, B)$  be a seed. 
The lower bound $ \mathfrak{L}_B \subset \AA$ associated with $({\bf x}, B)$ is the algebra
 generated by the set  $\{x_1,\ldots x_n,y_1\ldots, y_m\}$.
\end{definition}

\subsection{Upper cluster algebras}
\label{se:upper cluster algebras}
 Berenstein, Fomin and Zelevinsky
introduced the related concept of upper cluster algebras in \cite{BFZ}.  

\begin{definition}
  Let $\mathfrak{A} \subset \mathfrak{F}$ be a
  cluster algebra with initial cluster $(x_1, \ldots, x_n, B)$ and let, as above, $y_1,
  \ldots, y_m$ be the cluster variables obtained by mutation in the directions
  $1, \ldots, m$, respectively.
  
  \noindent(a) The upper bound $\UU_{{\bf x},B} ( \mathfrak{A})$ is defined as
  \begin{equation} 
\UU_{{\bf x},B} ( \mathfrak{A}) = \bigcap_{j = 1}^m \CC [x_1^{\pm 1}, \ldots
     x_{j - 1}^{\pm 1}, x_j, y_j, x_{j + 1}^{\pm 1}, \ldots, x_m^{\pm 1},
x_{m+1},\ldots,x_n] \ . \end{equation}

\noindent(b) The upper cluster algebra $\UU  ( \mathfrak{A})$ is defined as
$$\UU  ( \mathfrak{A})=\bigcap_{({\bf x'},B')}\UU_{\bf x'} ( \mathfrak{A})\ ,$$
where the intersection is over all seeds $({\bf x}',B')$ in the mutation equivalence class of $({\bf x},B)$.
\end{definition}

Observe that each cluster algebra is contained in its upper cluster algebra (see \cite{BFZ}).

\subsection{Poisson structures}
\label{se:poissonstructure}
Cluster algebras are closely related to Poisson algebras. In this section we recall some of the related notions and results. 
 
\begin{definition}
Let $k$ be a field of charactieristic $0$. A Poisson algebra is a pair
$(A,\{\cdot,\cdot\})$ consisting of a commutative $k$-algebra $A$ and a bilinear map
$\{\cdot,\cdot\}:A\otimes A\to A$,  satisfying for all $a,b,c\in A$:
\begin{enumerate}
\item skew-symmetry: $\{a,b\}=-\{b,a\}$ 
\item Jacobi identity: $\{a,\{b,c\}\}+\{c,\{a,b\}\}+\{b,\{c,a\}\}=0$,
\item Leibniz rule: $a\{b,c\}=\{a,b\}c+b\{a,c\}$.
\end{enumerate}
\end{definition} 
If there is no room for confusion we will refer to a Poisson algebra $(A,\{\cdot,\cdot\})$ simply as  $A$.
A {\it Poisson Ideal} $\II$ in a Poisson algebra $A$ is an ideal such that $\{\II,A\}\subset \II$, and if $k$ is of characteristic zero, then a Poisson prime ideal is a prime ideal which is also Poisson.

 Gekhtman, Shapiro and
Vainshtein showed in \cite{GSV} that one can associate Poisson structures to
cluster algebras in the following way.   Let $\mathfrak{A} \subset \CC[x_1^{\pm 1}, \ldots,
x_n^{\pm 1}] \subset \mathfrak{F}$ be a cluster algebra. A Poisson structure
$\{\cdot, \cdot\}$ on
$\CC [x_1, \ldots, x_n]$ is called log-canonical if   $\{
x_i,x_j\}=\lambda_{ij} x_ix_j$ with $\lambda_{ij}\in \CC$ for all $1\le i,j\le n$.
 
The Poisson structure can be naturally extended to $\mathfrak{F}$ by using the identity $0=\{f\cdot f^{-1},g\}$  for all $f,g\in\CC [x_1, \ldots, x_n]$.  We thus obtain that  $\{f^{-1},g\}=-f^{-2}\{f,g\}$ for all $f,g\in \mathfrak{F}$. 
We call $\Lambda=\left( \lambda_{ij}\right)_{i,j=1}^n$ the {\it coefficient matrix} of
the Poisson structure. We say that a Poisson structure on  $\mathfrak{F}$ is compatible with
$\mathfrak{A}$ if it is log-canonical with respect to each cluster $(y_1,\ldots,
y_n)$; i.e., it is log canonical on $\CC[y_1, \ldots, y_n]$.

\begin{remark}
\label{re:Class Poisson}
 A  classification of Poisson structures compatible with cluster algebras was obtained by Gekhtman, Shapiro and Vainshtein in \cite[Theorem 1.4]{GSV}.  It is easy to see from their description that if $n$ is even, then the cluster algebra has an admissible Poisson structure of maximal rank.
\end{remark}

We will refer to  the cluster algebra $\AA$ defined by the initial seeed
$({\bf x},B)$ together with the compatible Poisson structure  defined by the coefficient 
matrix $\Lambda$ with respect to the cluster ${\bf x}$ as the {\it Poisson cluster algebra}
defined by the {\it Poisson seed}  $({\bf x},B,\Lambda)$.

It is not obvious under which conditions  a Poisson seed $({\bf x},B,\Lambda)$ would yield a Poisson bracket $\{\cdot,\cdot\}_\Lambda$   on   $\mathfrak{F}$ such that $\{\AA,\AA\}_\Lambda\subset \AA$. We have, however, the following fact. 

\begin{proposition} 
Let   $({\bf x},B,\Lambda)$ be a Poisson seed and $\AA$ the corresponding cluster algebra. Then $\Lambda$ defines a Poisson algebra structure on the upper bound $\UU_{{\bf x},B}(\AA)$ and the upper cluster algebra $\UU(\AA)$.
\end{proposition}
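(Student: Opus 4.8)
The plan is to prove the sharper statement that \emph{each} of the Laurent-polynomial rings appearing in the intersections defining $\UU_{{\bf x},B}(\AA)$ and $\UU(\AA)$ is closed under $\{\cdot,\cdot\}_\Lambda$, i.e.\ is a Poisson subalgebra of $\mathfrak{F}$; the Poisson axioms themselves are inherited from $\mathfrak{F}$, so only closure must be checked. The proposition then follows from the elementary observation that an intersection of Poisson subalgebras of a fixed Poisson algebra is again one: if $\{A_\alpha,A_\alpha\}_\Lambda\subseteq A_\alpha$ for all $\alpha$ and $a,b\in\bigcap_\alpha A_\alpha$, then $\{a,b\}_\Lambda\in A_\alpha$ for every $\alpha$, whence $\{a,b\}_\Lambda\in\bigcap_\alpha A_\alpha$. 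Everything thus reduces to a single ring of the shape
\[
A_j=\CC[x_1^{\pm1},\ldots,x_{j-1}^{\pm1},x_j,y_j,x_{j+1}^{\pm1},\ldots,x_m^{\pm1},x_{m+1},\ldots,x_n],
\]
on which I would verify closure on generators and extend by the Leibniz rule.

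First I would dispose of the brackets that do not involve $y_j$. For generators drawn from $x_1^{\pm1},\ldots,x_n$ (inverses allowed exactly for the indices in $\{1,\ldots,m\}\setminus\{j\}$), log-canonicity of the initial cluster gives $\{x_i,x_k\}_\Lambda=\lambda_{ik}x_ix_k$, and the extension rule $\{f^{-1},g\}_\Lambda=-f^{-2}\{f,g\}_\Lambda$ yields $\{x_i^{\pm1},x_k^{\pm1}\}_\Lambda=\pm\lambda_{ik}x_i^{\pm1}x_k^{\pm1}$. Each such bracket is a Laurent monomial whose only inverted variables already carry inverses in $A_j$, so it lies in $A_j$. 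The whole difficulty is therefore concentrated in the brackets involving the mutated variable $y_j$.

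For $i\neq j$ (including frozen indices) both $y_j$ and $x_i$ belong to the once-mutated extended cluster $(x_1,\ldots,x_{j-1},y_j,x_{j+1},\ldots,x_n)$, which is again a cluster in the mutation class. Since the Poisson structure attached to a Poisson seed is, by definition, log-canonical with respect to \emph{every} cluster, $\{y_j,x_i\}_\Lambda$ is a scalar multiple of $y_jx_i$, hence lies in $A_j$; applying the extension rule handles $\{y_j,x_i^{-1}\}_\Lambda$ as well. The single bracket not covered this way is $\{y_j,x_j\}_\Lambda$, because $x_j$ is absent from the mutated cluster. Here I would differentiate the exchange relation $x_jy_j=P_j$: applying $\{\cdot,x_j\}_\Lambda$ and using $\{x_j,x_j\}_\Lambda=0$ gives $x_j\{y_j,x_j\}_\Lambda=\{P_j,x_j\}_\Lambda$. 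Since $b_{jj}=0$, the exchange polynomial $P_j=M_1+M_2$ is a sum of two monomials $M_1=\prod_{k\neq j}x_k^{b_{jk}^+}$ and $M_2=\prod_{k\neq j}x_k^{-b_{jk}^-}$ with nonnegative exponents in the variables $x_k$, $k\neq j$, so $\{P_j,x_j\}_\Lambda=(\alpha M_1+\beta M_2)x_j$ for scalars $\alpha,\beta$ by log-canonicity; cancelling $x_j$ leaves $\{y_j,x_j\}_\Lambda=\alpha M_1+\beta M_2\in A_j$. This completes the proof that $A_j$, and hence $\UU_{{\bf x},B}(\AA)=\bigcap_{j=1}^m A_j$, is a Poisson subalgebra. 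The upper cluster algebra is then immediate: running the same argument at every seed $({\bf x}',B')$ shows each $\UU_{{\bf x}',B'}(\AA)$ is a Poisson subalgebra (its building blocks are rings of the same shape as $A_j$, formed from ${\bf x}'$ and its once-mutations, all of which are clusters on which $\{\cdot,\cdot\}_\Lambda$ is log-canonical), and $\UU(\AA)$ is their intersection.

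I expect the main obstacle to be the bookkeeping around $\{y_j,x_j\}_\Lambda$. The tempting route---writing $y_j=x_j^{-1}P_j$ and expanding---produces terms carrying a factor $x_j^{-1}$, which do not individually lie in $A_j$; one must then recombine them into multiples of $y_j$ using log-canonicity of the mutated cluster, or, as above, differentiate the exchange relation directly so that $x_j^{-1}$ never appears. The conceptual point worth isolating is that it is precisely the compatibility hypothesis---log-canonicity with respect to every cluster, not merely the initial one---that makes the brackets $\{y_j,x_i\}_\Lambda$ with $i\neq j$ land back in $A_j$.
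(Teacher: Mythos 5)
Your proposal is correct and follows essentially the same route as the paper: show each localized polynomial ring $A_j$ in the defining intersection is a Poisson subalgebra of $\mathfrak{F}$ (resp.\ of the Laurent torus) and then use that an intersection of Poisson subalgebras is again Poisson. The only difference is one of thoroughness: the paper's proof records just the key bracket $\{x_j,y_j\}_\Lambda=\{x_j,x_j^{-1}P_j\}_\Lambda\in\CC[x_1,\ldots,x_n]$ (the $x_j^{-1}$ cancels since $P_j$ does not involve $x_j$, which is equivalent to your differentiation of the exchange relation), whereas you also make explicit the brackets $\{y_j,x_k^{\pm1}\}_\Lambda$ for $k\neq j$ via log-canonicity on the mutated cluster --- a point the paper leaves implicit.
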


\begin{proof}
Denote as above by $\{\cdot,\cdot\}_\Lambda$ the Poisson bracket on $\mathfrak{F}$ by $\Lambda$.
  Observe  that  the algebras  $\CC[x_1^{\pm 1},\ldots x_{i-1}^{\pm 1}, x_i,y_i, x_{i+1}^{\pm 1}, \ldots, x_n^{\pm 1}]$   are Poisson subalgebras of the Poisson algebra $\CC[x_1^{\pm 1},\ldots x_n^{\pm 1}]$  for each $1\le i\le m$, as $\{x_i,y_i\}_\Lambda=\{x_i,x_i^{-1}P_i\}_\Lambda\in \CC[x_1,\ldots, x_n]$. If $A$ is a Poisson algebra and $\{B_i\subset A:i\in I\}$ is a family of Poisson subalgebras, then $\bigcap_{i\in I} B_i$ is a Poisson algebra, as well. The assertion follows.
\end{proof}

 \subsection{Compatible Pairs and Their Mutation}
\label{se:Compatible Pairs and Mut}
Section \ref{se:Compatible Pairs and Mut} is dedicated to compatible pairs and their mutation.    Compatible pairs yield important examples of Poisson brackets which are compatible with a given cluster algebra structure, and   as we shall see below, they are also integral in defining quantum cluster algebras. Note that our  
 definition is slightly different from the original one in \cite{bz-qclust}. Let, as above, $m\le n$. 
Consider a pair consisting of a skew-symmetrizable $m\times n$-integer matrix
$B$ with rows labeled by the interval $[1,m]=\{1,\ldots, m\}$ and columns  labeled by   $[1,n]$
 together with a skew-symmetrizable $n\times n$-integer matrix  $\Lambda$  with rows and
columns labeled by $[1,n]$.   

\begin{definition}
\label{def:compa pair}
Let $B$ and $\Lambda$ be as above. We say that the pair $(B,\Lambda)$ is
compatible if the coefficients $d_{ij}$ of the $m\times n$-matrix $D=B\cdot \Lambda$ satisfy
$d_{ij}=d_i\delta_{ij}$
for some positive integers $d_i$ ($i\in [1,m]$).  
\end{definition}
This means that $D=B\cdot \Lambda$ is a $m\times n$ matrix where the only
non-zero entries are positive integers on the diagonal of the principal $m\times
m$-submatrix.

The following fact is obvious.

\begin{lemma}
\label{le:full rank}
Let $(B,\Lambda)$ be a compatible pair. Then $B\cdot \Lambda$ has full rank.
\end{lemma}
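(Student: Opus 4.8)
The plan is to read off the rank of $D = B\cdot\Lambda$ directly from the compatibility condition, which already pins down every entry of $D$. By Definition \ref{def:compa pair}, the $m\times n$ matrix $D$ satisfies $d_{ij} = d_i\delta_{ij}$ with each $d_i$ a positive integer. The first step is to record what this says concretely: splitting the columns according to $[1,m]$ and $[m+1,n]$, we may write $D$ in block form as $D = \left(\,\mathrm{diag}(d_1,\ldots,d_m)\ \big|\ 0\,\right)$, where the left block is the $m\times m$ diagonal matrix carrying the $d_i$ on its diagonal and the right block is the $m\times(n-m)$ zero matrix.

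Next I would observe that the left block $\mathrm{diag}(d_1,\ldots,d_m)$ is invertible, since its determinant $d_1\cdots d_m$ is a product of positive integers and is therefore nonzero. Consequently the first $m$ columns of $D$ are linearly independent, so $\operatorname{rank} D = m$. As $D$ is an $m\times n$ matrix with $m\le n$, the value $m$ is the largest rank such a matrix can attain; hence $D = B\cdot\Lambda$ has full (row) rank, as asserted.

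There is essentially no obstacle here: the compatibility hypothesis itself encodes the full-rank conclusion, and the argument amounts to unwinding the definition. Two small points are worth flagging. First, "full rank" here means full row rank $m$ rather than rank $n$, which is forced by $B$ having only $m$ rows; this is why the conclusion is the natural one for a compatible pair. Second, the skew-symmetrizability of $B$ and $\Lambda$ plays no role in the argument—only the diagonal, nonvanishing form of $D$ is used, and it is precisely the positivity (indeed merely the nonvanishing) of the $d_i$ that guarantees the nondegenerate diagonal block.
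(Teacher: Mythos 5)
Your proof is correct and is exactly the argument the paper has in mind: the paper labels this lemma ``obvious'' and offers no proof beyond the remark, immediately after Definition~\ref{def:compa pair}, that $D=B\cdot\Lambda$ has positive integers on the diagonal of its principal $m\times m$ submatrix and zeros elsewhere. Your unwinding of that observation---the left block $\mathrm{diag}(d_1,\ldots,d_m)$ has nonzero determinant $d_1\cdots d_m$, so $\operatorname{rank} D=m$, the maximal rank of an $m\times n$ matrix with $m\le n$---is precisely the intended (and essentially only) argument, and your flag that ``full rank'' means full row rank here is a sensible clarification.
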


  Let  $(B,\Lambda)$  be a compatible pair and let $k\in [1,m]$. We define for
$\varepsilon\in \{+1,-1\}$ a $n\times n$ matrix $E_{k,\varepsilon}$ via 
 
 \begin{itemize}
  \item $(E_{k,\varepsilon})_{ij}=\delta_{ij}$ if  $j\ne k$,
  
\item   $(E_{k,\varepsilon})_{ij}= -1$ if   $i=j= k$,
 
\item  $(E_{k,\varepsilon})_{ij}= max(0,-\varepsilon b_{ik})$ if  $i\ne j= k$.

\end{itemize}

 Similarly, we define a $m\times m$ matrix $F_{k,\varepsilon}$ via 
 
  \begin{itemize}
  \item $(F_{k,\varepsilon})_{ij}=\delta_{ij}$   if  $i\ne k$,
  
\item   $(F_{k,\varepsilon})_{ij}= -1$ if   $i=j= k$,
 
\item  $(F_{k,\varepsilon})_{ij}= max(0,\varepsilon b_{kj})$ if  $i= k\ne j$.

\end{itemize}

  We define a new pair $(B_k,\Lambda_k)$ as
  \begin{equation}
  \label{eq:mutation matrix and Poisson}
   B_k=      F^T_{k,\varepsilon}   B E_{k,\varepsilon}^T \ , \quad
\Lambda_k=E_{k,\varepsilon} \Lambda E_{k,\varepsilon}^T\ ,
   \end{equation}
  where $X^T$ denotes the transpose of $X$. We chose this rather non-straightforward way of defining $E_{k\varepsilon}$ and $F_{k,\varepsilon}$ in order to show how our definition relates to that of \cite{bz-qclust}. We will not need it in what follows.  The motivation for the definition is the following fact.
  
  \begin{proposition}\cite[Prop. 3.4]{bz-qclust}
  \label{pr:comp under mutation}
  The pair  $(B_k,\Lambda_k)$ is compatible. Moreover, $\Lambda_k$ is
independent of the choice of the sign $\varepsilon$. 
  \end{proposition}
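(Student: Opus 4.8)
The plan is to verify both assertions by direct computation, leaning on two structural features: that $E_{k,\varepsilon}$ and $F_{k,\varepsilon}$ are involutions, and that the whole content of compatibility is the single identity $B\Lambda=D$, whose $k$-th row reads $\sum_l b_{kl}\Lambda_{lj}=d_k\delta_{kj}$. First I would record that $E_{k,\varepsilon}^2=I_n$ and $F_{k,\varepsilon}^2=I_m$; both follow in one line from the definitions, since squaring only affects the distinguished $k$-th line, where the entries $\max(0,\pm b)$ and the $-1$ on the diagonal recombine to the identity. I would also write $E_{k,\varepsilon}=I_n+u_\varepsilon e_k^T$, a rank-one modification of the identity in the $k$-th column, and observe that $E_{k,+}$ and $E_{k,-}$ differ only by $p\,e_k^T$, where $p=u_+-u_-$ is (minus) the $k$-th line of $B$; this uses the elementary identity $\max(0,-x)-\max(0,x)=-x$.

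For the sign-independence of $\Lambda_k$ I would expand $\Lambda_k=E_{k,\varepsilon}\Lambda E_{k,\varepsilon}^{T}$ into $\Lambda$ plus two rank-one terms plus a corner term; the corner term is a multiple of $e_k^T\Lambda e_k=\Lambda_{kk}=0$ and drops out. Subtracting the expressions for the two signs leaves a skew matrix of the shape $(\Lambda p)e_k^T+e_k(p^T\Lambda)$, supported on row and column $k$. This is where compatibility enters: reading $B\Lambda=D$ along row $k$ gives $\Lambda p=d_k e_k$, whence $p^T\Lambda=-(\Lambda p)^T=-d_k e_k^T$ by skew-symmetry of $\Lambda$, and the difference collapses to $d_k e_k e_k^T-d_k e_k e_k^T=0$. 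I expect this to be the cleanest step once $p$ is correctly matched with the particular line of $B$ that $B\Lambda=D$ controls; pinning down that identification (row versus column of the non-square matrix $B$) is the one genuinely delicate point, since the cross-terms cancel only for the line governed by the compatibility relation.

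For compatibility of $(B_k,\Lambda_k)$ I would compute $D_k=B_k\Lambda_k$ and show it equals the same $D$, mirroring \cite[Prop.~3.4]{bz-qclust}. Substituting the definitions of $B_k$ and $\Lambda_k$, the two inner copies of the mutation matrix combine by the involution property $E_{k,\varepsilon}^2=I_n$, exposing a factor $B\Lambda$; replacing it by $D$ reduces $D_k$ to a one-sided conjugate of $D$ by $F_{k,\varepsilon}$ and $E_{k,\varepsilon}$. A short inspection of the explicit shapes of the mutation matrices and of the block form of $D$ (the diagonal $\mathrm{diag}(d_1,\dots,d_m)$ on the principal part, zero elsewhere) then shows that this conjugation returns $D$ unchanged, so $D_k=D$; since the $d_i$ are positive integers, $(B_k,\Lambda_k)$ is compatible with the same $d_i$, and that $B_k$ is again skew-symmetrizable is the standard invariance of matrix mutation. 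The main obstacle I anticipate is precisely this reduction: arranging the product so that the inner mutation factors genuinely cancel (which fixes the transpose convention in the definitions of $E_{k,\varepsilon}$, $F_{k,\varepsilon}$ and $\Lambda_k$), and then confirming that the residual conjugation fixes the very special matrix $D$, using the explicit sign pattern of the mutation matrices together with the skew-symmetrizability of the principal block of $B$.
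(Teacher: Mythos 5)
The paper gives no proof of this proposition at all---it is quoted from Berenstein--Zelevinsky with the citation \cite{bz-qclust} standing in for the argument---so your direct verification cannot coincide with ``the paper's proof''; what it does, in substance, is correctly reconstruct the original computation of \cite[Prop.~3.4]{bz-qclust}. Your key steps are all sound: $E_{k,\varepsilon}^2=I_n$ and $F_{k,\varepsilon}^2=I_m$ by a one-line check on the distinguished row/column; $E_{k,+}-E_{k,-}=p\,e_k^T$ with $p_i=\max(0,-b_{ik})-\max(0,b_{ik})=-b_{ik}$; the sign-difference of $\Lambda_k$ collapsing to $(\Lambda p)e_k^T+e_k(p^T\Lambda)$, which vanishes because the $k$-th row of $B\Lambda=D$ gives $\Lambda p=d_ke_k$ and skew-symmetry gives $p^T\Lambda=-d_ke_k^T$; and the reduction of $B_k\Lambda_k$ to $F^T D E$ followed by the verification $F^TDE=D$. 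Two caveats deserve emphasis. First, the transpose issue you flag as ``the one genuinely delicate point'' is not merely delicate here---the conventions as literally printed in this paper are inconsistent: with $E$ modified in column $k$, the stated formulas $\Lambda_k=E_{k,\varepsilon}\Lambda E_{k,\varepsilon}^T$ and $B_k=F^T_{k,\varepsilon}BE^T_{k,\varepsilon}$ produce inner factors $E^TE\ne I_n$ rather than $(E^2)^T=I_n$, and the sign-difference then has the shape $p(e_k^T\Lambda)+(\Lambda e_k)p^T$, which need not vanish; your expansion tacitly (and correctly) uses the pairing of \cite{bz-qclust}, namely $\Lambda_k=E^T\Lambda E$ with the transposes on $B_k$ arranged so the inner copies cancel. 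Since the paper itself says its definitions are rearranged from \cite{bz-qclust} and even writes $b_{ik}$ with $i$ ranging over $[1,n]$ although $B$ has $m$ rows, resolving the convention as you do is the right call, but a finished write-up must fix it explicitly. Second, the ``short inspection'' that $F^TDE=D$ conceals one genuine ingredient: the entries in row and column $k$ cancel only because $d_i\max(0,-\varepsilon b_{ik})=d_k\max(0,\varepsilon b_{ki})$, i.e.\ because the compatibility integers $d_i$ themselves skew-symmetrize the principal block of $B$ ($d_ib_{ik}=-d_kb_{ki}$). This is stronger than the bare assumption that some skew-symmetrizer exists, but it does follow from compatibility, e.g.\ because $B\Lambda B^T$ is skew-symmetric and equals $D B^T$. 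With the convention pinned down and this sub-lemma stated, your proof is complete and is essentially the one the cited reference gives.
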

 
 The following fact is clear.

\begin{corollary}
Let $\AA$ be a cluster algebra given by an initial seed $({\bf x}, B)$ where $B$ is a $m\times n$-matrix. If $(B,\Lambda)$ is a compatible pair, then $\Lambda$ defines a compatible Poisson bracket on $\mathfrak{F}$ and on $\UU(\AA)$.
\end{corollary}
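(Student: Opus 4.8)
The plan is to obtain the bracket directly from $\Lambda$ on the initial cluster, transport log-canonicality across the whole mutation class with the help of Proposition~\ref{pr:comp under mutation}, and finally invoke the Proposition of Section~\ref{se:poissonstructure} for the statement about $\UU(\AA)$. First I would set $\{x_i,x_j\}_\Lambda=\lambda_{ij}x_ix_j$, extend by the Leibniz rule to $\CC[x_1,\ldots,x_n]$ and then to $\mathfrak F$ through $\{f^{-1},g\}_\Lambda=-f^{-2}\{f,g\}_\Lambda$, exactly as in Section~\ref{se:poissonstructure}. On monomials this gives $\{x^u,x^v\}_\Lambda=(u^T\Lambda v)\,x^ux^v$, and the skew-symmetry of $\Lambda$ makes the Jacobiator of any three monomials cancel term by term; skew-symmetry and Leibniz for the bracket itself are immediate. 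Thus $\Lambda$ defines a log-canonical Poisson bracket on $\mathfrak F$ with respect to the initial cluster.

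The substance of the corollary is that $\{\cdot,\cdot\}_\Lambda$ is \emph{compatible}, i.e.\ log-canonical with respect to every cluster and not merely ${\bf x}$. I would prove this by induction along the mutation graph, so that it suffices to treat a single mutation $\mu_k$ with $k\in[1,m]$. Writing the new cluster as $y_i=x_i$ for $i\neq k$ and $y_k=x_k^{-1}P_k$, the brackets $\{y_i,y_j\}_\Lambda$ with $i,j\neq k$ are visibly log-canonical with coefficient $\lambda_{ij}$, so the only point to verify is that $\{y_i,y_k\}_\Lambda$ is again a scalar multiple of $y_iy_k$. Since $y_k=x_k^{-1}(M_++M_-)$ is built from the two monomials $M_\pm$ of the exchange polynomial $P_k$, this bracket is a priori a sum $c_+\,x_ix_k^{-1}M_+ + c_-\,x_ix_k^{-1}M_-$, and the monomial formula identifies the difference $c_+-c_-$ with the $(k,i)$-entry of $B\Lambda$ (up to sign). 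The compatibility hypothesis $B\Lambda=D$ with $D$ diagonal therefore forces $c_+=c_-$ whenever $i\neq k$, so the two terms collapse to $c\,y_iy_k$; reading off all the coefficients shows that the coefficient matrix of $\{\cdot,\cdot\}_\Lambda$ in the cluster $(y_1,\ldots,y_n)$ is precisely the congruence transform $\Lambda_k$ of $\Lambda$ from \eqref{eq:mutation matrix and Poisson}.

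Proposition~\ref{pr:comp under mutation} now closes the induction: $(B_k,\Lambda_k)$ is again a compatible pair, so the single-step argument applies verbatim at the next mutation, and by induction $\{\cdot,\cdot\}_\Lambda$ is log-canonical on $\CC[y_1,\ldots,y_n]$ for every seed $({\bf y},\tilde B)$ in the mutation class. Hence $({\bf x},B,\Lambda)$ is a genuine Poisson seed and $\{\cdot,\cdot\}_\Lambda$ is a compatible Poisson bracket on $\mathfrak F$; the remaining assertion that it restricts to $\UU_{{\bf x},B}(\AA)$ and to $\UU(\AA)$ is exactly the Proposition of Section~\ref{se:poissonstructure}. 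I expect the one genuinely non-formal point --- the main obstacle --- to be the collapse $c_+=c_-$ above: it is the precise sense in which compatibility of the pair is used, and it is also encoded in the $\varepsilon$-independence of $\Lambda_k$ asserted in Proposition~\ref{pr:comp under mutation} (the two signs $\varepsilon=\pm1$ corresponding to the two monomials $M_\pm$), so for the skew-symmetrizable bookkeeping I would lean on that statement rather than redo the computation by hand.
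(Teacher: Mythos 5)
Your proof is correct and follows exactly the route the paper intends: the paper states this corollary without proof (``The following fact is clear''), taking it as the combination of Proposition~\ref{pr:comp under mutation} with the Proposition of Section~\ref{se:poissonstructure}, which are precisely the two ingredients you invoke. Your single-mutation computation --- that $c_+-c_-$ equals $\pm(B\Lambda)_{ki}$, so the diagonality of $B\Lambda$ forces the two exchange-monomial terms to collapse to a multiple of $y_iy_k$ --- is the standard Gekhtman--Shapiro--Vainshtein argument and correctly supplies the one detail the paper leaves implicit, namely that $\Lambda_k$ really is the coefficient matrix of the same bracket in the mutated cluster, not merely a matrix compatible with $B_k$.
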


\begin{example}
\label{ex:acyclic}
 If $m=n$ (i.e.~there are no coefficients/frozen variables) and $B$ has full rank, then $(B, \mu B^{-1})$ is a compatible pair for all $\mu\in \ZZ_{> 0}$ such that $\mu B^{-1}$ is an integer matrix. It follows from \cite[Theorem 1.4]{GSV} that in this case all compatible Poisson brackets arise in this way.
\end{example}
\begin{example}
Recall that double Bruhat cells in complex semisimple connected and simply connected algebraic groups have a natural structure of an upper cluster algebra (see \cite{BFZ}). Berenstein and Zelevinsky showed that the standard Poisson structure is given by compatible pairs relative to this upper cluster algebra structure (see \cite[Section 8]{bz-qclust}).  
\end{example}

    \subsection{Quantum Cluster Algebras}
    \label{se: q-cluster algebra spec}
In this section we recall the the definition of a quantum cluster  algebra, introduced by
Berenstein and Zelevinsky in \cite{bz-qclust}. 
We define, for each skew-symmetric $n\times n$-integer matrix $\Lambda$,
the skew-polynomial ring $\CC_\Lambda^t[x_1,\ldots, x_n]$ to be the $\CC[t^{\pm
1}]$-algebra generated by $x_1,\ldots, x_n$ subject to the relations
$$x_ix_j=t^{\lambda_{ij}} x_jx_i\ .$$

Analogously, the quantum torus $H_\Lambda^t=\CC_\Lambda^t[x_1^{\pm 1},\ldots, x_n^{\pm 1} ]$ is defined as the localization of
$\CC_\Lambda^t[x_1,\ldots, x_n]$ at the monoid generated by ${x_1,\ldots, x_n}$,
which is an Ore set. The quantum torus is clearly contained in the skew-field of
fractions $\mathcal{F}_\Lambda$ of $\CC_\Lambda^t[x_1,\ldots, x_n]$, and the
Laurent monomials define a lattice $L\subset  H_\Lambda^t\subset
\mathcal{F}_\Lambda$   isomorphic to $\ZZ^n$. Denote for  each ${\bf e}=(e_1,\ldots e_n)\in \ZZ^n$ by $x^{\bf e}$ the monomial $x_1^{e_1}\ldots x_n^{e_n}$.

We need the notion of a toric frame in order to define the
quantum cluster algebra.
 
 \begin{definition} 
 A toric frame in $ \mathfrak F$ is a mapping $M:\ZZ^n\to  \mathfrak F-\{0\}$ of
the form
 $$M(c)=\phi(X^{\eta(c)})\ ,$$
 where $\phi$ is a  $\QQ(\frac{1}{2})$-algebra automorphism of $\mathfrak F$ and
$\eta: \ZZ^n\to L$ an isomorphism of lattices.
 \end{definition}
  
  Since a toric frame $M$ is determined uniquely by the images of the standard
basis vectors $\phi(X^{\eta(e_1)})$,\ldots, $\phi(X^{\eta(e_n)})$ of $\ZZ^n$, we
can associate to each toric frame a skew commutative $n\times n$-integer matrix
$\Lambda_M$. 
  We can now  define the quantized version of a seed.
  
  \begin{definition}\cite[Definition 4.5]{bz-qclust}
  A quantum seed is a pair $(M,B)$ where
  \begin{itemize}
  \item $M$ is a toric frame in $\mathfrak F$.
  \item $B$ is a   $n\times m$-integer matrix  with rows labeled by $[1,m]$ and
columns  labeled by  $[1,n]$.  
  \item The pair $(B,\Lambda_M)$ is compatible.
  \end{itemize}
  \end{definition}
  Now we define the seed mutation in direction of an exchangeable index $k\in 
[1,m]$. For each $\varepsilon\in \{ 1,-1\}$ we define a mapping $M_k:
\ZZ^n\to  \mathfrak F$ via
  
  $$M_k(c)=\sum_{p=0}^{c_k} \binom{c_k}{p}_{q^{d_k}{2}} M(E_\varepsilon c
+\varepsilon p b^k)\ , \quad M_k(-c)=M_k(c)^{-1}\ ,$$
  where we use the well-known $q$-binomial coefficients (see e.g. \cite[Equation
4.11]{bz-qclust}), and the matrix $E_{k,\varepsilon}$ defined in Section \ref{se:Compatible Pairs and Mut}.  Define $B_k$ to be obtained from $B$ by the standard matrix mutation in
direction $k$, as in Section \ref{se:Cluster Algebras-def}. One obtains the
following fact.
  
  \begin{proposition}
\cite[Prop. 4.7]{bz-qclust}
(a) The map $M_k$ is a toric frame, independent of the choice of sign
$\varepsilon$. 

\noindent(b) The pair $(B_k,\Lambda_{M_k})$ is a quantum seed. 
  \end{proposition}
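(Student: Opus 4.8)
The plan is to check directly that the mutated map $M_k$ is again of the form $\phi(X^{\eta(\cdot)})$, i.e.\ a toric frame, with quasi-commutation matrix given by the rule \eqref{eq:mutation matrix and Poisson}, and then to obtain part (b) immediately from Proposition \ref{pr:comp under mutation}. The first step is to evaluate $M_k$ on the standard basis of $\ZZ^n$. For $i\neq k$ the $k$-th coordinate of $e_i$ vanishes and $E_{k,\varepsilon}e_i=e_i$, so only the term $p=0$ survives and $M_k(e_i)=M(e_i)$. For $i=k$ the sum runs over $p=0,1$; using $b_{kk}=0$ one identifies the two summands as the familiar exchange monomials, so that
\[
M_k(e_k)=M\Bigl(-e_k+\sum_i (-\varepsilon b_{ik})^{+}e_i\Bigr)+M\Bigl(-e_k+\sum_i (\varepsilon b_{ik})^{+}e_i\Bigr)=M(v_-)+M(v_+),
\]
where $v_\pm$ are the two exponent vectors and $v_+-v_-=\varepsilon b^{k}$, with $b^{k}$ the $k$-th column of $B$.

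Since a toric frame is determined by its values on $e_1,\dots,e_n$, it suffices to show that the elements $M_k(e_1),\dots,M_k(e_n)$ pairwise quasi-commute according to the mutated matrix, that they again generate the quantum torus, and that the defining $q$-binomial sum agrees with the resulting multiplicative frame. The heart of the matter is one commutation computation, and this is exactly where the compatibility of $(B,\Lambda)$ is used. For $j\neq k$ we have $M(e_j)M(v_\pm)=t^{\Lambda(e_j,v_\pm)}M(v_\pm)M(e_j)$, and the two exponents differ by $\Lambda(e_j,v_+-v_-)=\varepsilon\,\Lambda(e_j,b^{k})$. Definition \ref{def:compa pair} forces the off-diagonal pairings $\Lambda(e_j,b^{k})$ with $j\neq k$ to vanish, so that $M(e_j)$ slides past the whole sum $M(v_-)+M(v_+)$ by a single scalar, precisely as a toric monomial must; the same compatibility condition pins the remaining pairing $\Lambda(v_-,v_+)$ to $\pm d_k$, whence $M(v_-)$ and $M(v_+)$ quasi-commute with the base $q^{d_k}$ of the $q$-binomial coefficients occurring in the mutation formula.

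With these relations established, the $q$-binomial theorem finishes part (a). Rewriting $E_{k,\varepsilon}c+\varepsilon p\,b^{k}=\sum_{i\neq k}c_ie_i+(c_k-p)v_-+pv_+$, the defining sum collapses into $M_k(c)=(\text{a monomial in the directions }i\neq k)\cdot M_k(e_k)^{c_k}$, which confirms that $M_k$ is the toric frame generated by $M_k(e_1),\dots,M_k(e_n)$, and that its quasi-commutation matrix is the mutated matrix $\Lambda_{M_k}=\Lambda_k$ of \eqref{eq:mutation matrix and Poisson}. Independence of the sign $\varepsilon$ is then transparent: for $i\neq k$ the generator $M_k(e_i)=M(e_i)$ does not involve $\varepsilon$, while $M_k(e_k)=M(v_-)+M(v_+)$ is symmetric under $\varepsilon\mapsto-\varepsilon$, which merely swaps $v_+$ and $v_-$; and $\Lambda_{M_k}$ is $\varepsilon$-independent by Proposition \ref{pr:comp under mutation}.

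Part (b) is now immediate. By part (a) the quasi-commutation matrix of the toric frame $M_k$ is $\Lambda_{M_k}=\Lambda_k$, and $B_k$ is the ordinary matrix mutation of $B$, so $(B_k,\Lambda_{M_k})$ is exactly the pair of \eqref{eq:mutation matrix and Poisson}; by Proposition \ref{pr:comp under mutation} this pair is compatible. Hence all three requirements in the definition of a quantum seed hold, and $(M_k,B_k)$ is a quantum seed. I expect the only real obstacle to be the commutation computation of the second paragraph: one must track the $(\cdot)^{+}$ truncations of the entries $b_{ik}$ together with the $q$-binomial coefficients and verify that the two exchange monomials multiply exactly like a single toric-frame generator, and it is the compatibility condition $B\Lambda=D$ that makes the cross terms recombine in the required way.
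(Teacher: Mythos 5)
The paper offers no proof of this proposition --- it is quoted directly from Berenstein--Zelevinsky \cite[Prop.~4.7]{bz-qclust} --- so there is no internal argument to compare against; your reconstruction follows the same route as the cited source and is correct in substance. The key computations are right: $M_k(e_i)=M(e_i)$ for $i\neq k$, the identification $M_k(e_k)=M(v_-)+M(v_+)$ with $v_+-v_-=\varepsilon b^k$, the use of compatibility to get $\Lambda(e_j,b^k)=\pm d_k\delta_{jk}$ (so that $M(e_j)$ slides past both exchange monomials by one scalar, and $M(v_\pm)$ quasi-commute with parameter $q^{\pm d_k}$, matching the base of the $q$-binomial coefficients), the collapse $E_{k,\varepsilon}c+\varepsilon p\,b^k=\sum_{i\neq k}c_ie_i+(c_k-p)v_-+pv_+$, and the deduction of (b) from Proposition \ref{pr:comp under mutation}. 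The one step you name but do not carry out --- that the elements $M_k(e_1),\dots,M_k(e_n)$ generate a quantum torus whose skew field of fractions is all of $\mathfrak{F}$, so that the automorphism $\phi$ required by the definition of a toric frame actually exists, together with consistency of the formula for $c_k<0$ --- is where the published proof expends its technical effort (one recovers $M(e_k)$ from the quantum exchange relation to see generation); as it stands your text asserts rather than proves this, so it is a sketch of the correct argument rather than a complete one, but the gap is routine and is closed in the cited reference.
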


Now, given an {\it initial quantum seed} $(B, \Lambda_M) $ denote, in a slight
abuse of notation, by $X_1=M(e_1),\ldots, X_r=M(e_r)$, which we refer to as the
{\it cluster variables} associated to the quantum seed $(M,B)$. Here our
nomenclature differs slightly from \cite{bz-qclust}, since there one considers the
coefficients not to be cluster variables. We now define the seed mutation
$$X_k'=M(-e_k+\sum_{b_{ik}>0} b_{ik}e_i)+ M(-e_k-\sum_{b_{ik}<0} b_{ik}e_i)\ .$$

We obtain that $X_k'=M_k(e_k)$ (see \cite[Prop. 4.9]{bz-qclust}).  
We say that two quantum seeds $(M,B)$ and $(M',B')$ are mutation-equivalent if
they can be obtained from one another by a sequence of mutations. Since
mutations are involutive (see \cite[Prop 4.10]{bz-qclust}), the quantum seeds in
$\mathfrak{F}$ can be grouped in equivalence classes, defined by the relation of
mutation equivalence. The quantum cluster algebra generated by a seed 
$(M,B)\subset \mathfrak F$ is the $\CC[t^{\pm 1}]$-subalgebra generated by
the  cluster variables associated to the seeds in an equivalence class.

\begin{remark}
There are definitions of quantum lower bounds, upper bounds and quantum upper cluster algebras (see \cite[Sections 5 and 7]{bz-qclust}),  analogous to the classical case.
\end{remark}

\section{Intersections of Ideals with  Clusters}
\label{se:key propositions}
In the present chapter we consider the intersection  between Poisson ideals in a cluster algebra and individual clusters. Moreover, we prove quantum analogues of the propositions whenever available.

\begin{proposition} 
\label{pr:TPP super toric}
Let ${\bf x}$ be a cluster,  $rank(\Lambda)=n$  and $\II$ be a non-zero Poisson ideal. Then the ideal $\II$ contains a monomial in  $x^m\in \CC[{\bf x}]$. 
\end{proposition}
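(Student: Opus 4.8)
The plan is to produce the desired monomial by an induction that shrinks the number of monomials appearing in a single element of $\II$, using the Hamiltonian operators $\{x_i,-\}$ to peel off the monomials one at a time. First I would reduce to a genuine polynomial: by the Laurent phenomenon every element of $\AA$, hence every element of $\II$, is a Laurent polynomial in the cluster $\mathbf x=(x_1,\dots,x_n)$. Picking $0\neq f\in\II$ and multiplying by a monomial $x^{\mathbf N}$ with $\mathbf N\geq 0$ componentwise and large, we get $g:=x^{\mathbf N}f\in\II$ (since $x^{\mathbf N}\in\AA$ and $\II$ is an ideal), and for $\mathbf N$ large its exponents are nonnegative, so $g\in\II\cap\CC[\mathbf x]$ is a nonzero honest polynomial. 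Write $g=\sum_{\mathbf e\in S}c_{\mathbf e}x^{\mathbf e}$ with $S\subset\ZZ_{\geq0}^n$ finite and every $c_{\mathbf e}\neq0$. The one computation needed is the Leibniz identity for the log-canonical bracket, $\{x_i,x^{\mathbf e}\}=(\Lambda\mathbf e)_i\,x_i x^{\mathbf e}$, where $(\Lambda\mathbf e)_i=\sum_j\lambda_{ij}e_j$; hence $\{x_i,g\}=\sum_{\mathbf e\in S}c_{\mathbf e}(\Lambda\mathbf e)_i\,x_i x^{\mathbf e}$.

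Next I would perform the support reduction. If $|S|\geq2$, choose $\mathbf a\neq\mathbf b$ in $S$. Because $\mathrm{rank}(\Lambda)=n$, the integer map $\mathbf e\mapsto\Lambda\mathbf e$ has trivial kernel, so $\Lambda(\mathbf a-\mathbf b)\neq0$ and there is an index $i$ with $(\Lambda\mathbf a)_i\neq(\Lambda\mathbf b)_i$. Set
\[
g' := \{x_i,g\}-(\Lambda\mathbf a)_i\,x_i g=\sum_{\mathbf e\in S}c_{\mathbf e}\bigl((\Lambda\mathbf e)_i-(\Lambda\mathbf a)_i\bigr)x_i x^{\mathbf e}.
\]
Both summands lie in $\II\cap\CC[\mathbf x]$ — the first because $\II$ is a Poisson ideal and $x_i\in\AA$, the second because $\II$ is an ideal — so $g'\in\II\cap\CC[\mathbf x]$. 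The monomials $x_i x^{\mathbf e}$ ($\mathbf e\in S$) are distinct, the coefficient attached to $x_i x^{\mathbf a}$ is $c_{\mathbf a}\bigl((\Lambda\mathbf a)_i-(\Lambda\mathbf a)_i\bigr)=0$, and the coefficient attached to $x_i x^{\mathbf b}$ is $c_{\mathbf b}\bigl((\Lambda\mathbf b)_i-(\Lambda\mathbf a)_i\bigr)\neq0$. Thus $g'\neq0$ while $|\mathrm{supp}(g')|\leq|S|-1$. Iterating, the support size strictly decreases but stays positive, so after finitely many steps we reach a nonzero element of $\II\cap\CC[\mathbf x]$ with a single monomial, i.e.\ a nonzero scalar multiple of some $x^{\mathbf m}\in\CC[\mathbf x]$; hence $x^{\mathbf m}\in\II$, as required.

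I expect the only genuine subtlety to be keeping the successive elements simultaneously inside $\II$ \emph{and} inside the polynomial ring $\CC[\mathbf x]$. The natural move is to divide $\{x_i,g\}$ by its common factor $x_i$ — equivalently, to localize at the cluster variables and exploit the simultaneously diagonalizable derivations $x_i^{-1}\{x_i,-\}$, whose eigenvalue on $x^{\mathbf e}$ is $(\Lambda\mathbf e)_i$ — and then separate eigenweights by interpolation. That route cleanly yields a \emph{Laurent} monomial in the extended ideal $\II\cdot\CC[x_1^{\pm1},\dots,x_n^{\pm1}]$, but dividing by $x_i$ need not land back in $\II$, so descending to a monomial of $\II$ itself is awkward. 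The combination $\{x_i,g\}-(\Lambda\mathbf a)_i x_i g$ sidesteps this entirely, since both summands are manifestly in $\II\cap\CC[\mathbf x]$; the full-rank hypothesis on $\Lambda$ enters exactly once, to guarantee the existence of a separating index $i$ at each stage.
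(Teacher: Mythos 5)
Your proof is correct and follows essentially the same route as the paper: clear denominators to get a nonzero element of $\II\cap\CC[\mathbf x]$, then use the combination $\{x_i,g\}-(\Lambda\mathbf a)_i\,x_ig$ (the paper's $cx_if-\{x_i,f\}$), with the full rank of $\Lambda$ supplying a separating index, to strictly shrink the support down to a single monomial. The only difference is presentational --- the paper phrases the descent as a minimal-counterexample contradiction, while you run it as a direct induction, and your version is in fact the cleaner write-up of the same argument.
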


\begin{proof}
Notice first that $\II_{\bf x}=\II\cap \CC[x_1,\ldots, x_n]\ne 0$. Indeed, let $0\ne f\in \II$. We can express $f$ as a Laurent polynomial in the variables $x_1,\ldots, x_n$; i.e., $f=x_{1}^{-c_1}\ldots x_n^{-c_n} g$ where $c_1,\ldots, c_n\in \ZZ_{\ge 0}$ and $0\ne g\in\CC[x_1,\ldots, x_n]$. Clearly, $g= x_{1}^{c_1}\ldots x_n^{c_n}f \in \II_{\bf x}$. 

We complete the proof  by contradiction. Let $f=\sum_{{\bf w}\in \ZZ^n} c_{\bf w} x^{\bf w}\in\II_{\bf x}$   We assume that $f$ has the smallest number of nonzero summands such that no monomial term $c_{\bf w} x^{\bf w}$ with $c_{\bf w}\neq 0$ is contained in $\II$. It must therefore have at least two monomial terms.

    Assume, as above that $c_{\bf w},c_{{\bf w}'}\ne 0$ and  denote by ${\bf v}$ the difference ${\bf v}={\bf w}-{\bf w}'$. Since $\Lambda$ has full rank,  there exists $i\in [1,n]$ such that $\{x_i,x^{\bf v}\}\ne 0$. Therefore,  $\{x_i, x^{\bf w}\}=cx_ix^{\bf w}\ne dx_ix^{{\bf w}'}\{x_i, x^{{\bf w}'}\}$ for some $c,d\in \CC$. 
Note that $\{x_i,f\}=\sum_{{\bf w}\in \ZZ^n} c_{\bf w} \lambda_{\bf w}x^{\bf w}x_i$ for certain $ \lambda_{\bf w}\in\ZZ$.
Clearly, $cx_if-\{x_i,f\}\in \II$ and 
$$ cx_if-\{x_i,f\}=(c-c)c_{\bf w} x^{\bf w}x_i+(c-d)c_{{\bf w}'} x^{{\bf w}'}x_i+\ldots\ . $$

Hence, $cx_if-\{x_i,f\}\ne 0$ and it has fewer monomial summands than $f$ which contradicts our assumption. Therefore,   $\II$ contains a  monomial.   The proposition is proved.
\end{proof}

 The following fact is an obvious corollary of Proposition \ref{pr:TPP super toric}.
\begin{proposition}
\label{th:intersection}
Let  $\AA$ be the cluster algebra defined by a Poisson  seed $({\bf x}, B,\Lambda)$ with $rank(\Lambda)=n$.  If  $\II\subset \AA$ is a non-zero  Poisson prime ideal, then $\II$ contains a cluster variable $x_i$.
\end{proposition}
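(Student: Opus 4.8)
The plan is to deduce Proposition \ref{th:intersection} from Proposition \ref{pr:TPP super toric} by a factorization argument, exploiting the fact that $\II$ is not merely a Poisson ideal but a Poisson \emph{prime} ideal. First I would apply Proposition \ref{pr:TPP super toric} directly: since $rank(\Lambda)=n$ and $\II$ is a non-zero Poisson ideal, we obtain a monomial $x^{\mathbf{m}}=x_1^{m_1}\cdots x_n^{m_n}\in\II$ with $\mathbf{m}=(m_1,\ldots,m_n)\in\ZZ_{\ge 0}^n$. The whole content of the corollary is then to upgrade ``contains a monomial'' to ``contains a single cluster variable $x_i$,'' and this is exactly where primality enters.

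The key step is the following. Write the monomial as $x^{\mathbf{m}}=x_i\cdot\left(x^{\mathbf{m}}/x_i\right)$ whenever some coordinate $m_i\ge 1$; here $x^{\mathbf{m}}/x_i$ denotes the monomial with exponent vector $\mathbf{m}-e_i$, which again lies in $\CC[x_1,\ldots,x_n]\subset\AA$. Since $\II$ is prime and $x_i\cdot\left(x^{\mathbf{m}}/x_i\right)\in\II$, at least one of the two factors lies in $\II$. If $x_i\in\II$ we are done; otherwise the factor of strictly smaller total degree lies in $\II$, and I would iterate. Formally I would run an induction on $\sum_i m_i$ (or simply invoke that a prime ideal containing a product of cluster variables must contain one of them), bottoming out at a single variable $x_i\in\II$. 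The monomial cannot reduce to the empty product (a unit, i.e.\ the constant $1$), since $\II$ is a \emph{proper} ideal—this is the one place I must check that $\II\ne\AA$, which is implicit in $\II$ being prime.

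The main obstacle, such as it is, is almost entirely bookkeeping rather than mathematical: I must make sure the factors $x_i$ and $x^{\mathbf{m}-e_i}$ genuinely live inside $\AA$ so that primality of $\II\subset\AA$ applies to the factorization. Each $x_i$ is an initial cluster variable and hence lies in $\AA$ by definition, and products of them lie in the polynomial subring $\CC[x_1,\ldots,x_n]\subset\AA$, so there is no difficulty. One subtlety worth a sentence is that the monomial produced by Proposition \ref{pr:TPP super toric} is an honest monomial in the $x_k$ (non-negative exponents), not a Laurent monomial—but the statement of that proposition already delivers $x^{\mathbf{m}}\in\CC[{\bf x}]$, so the exponents are indeed non-negative and the repeated factoring stays inside the polynomial ring. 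Hence no localization or denominator-clearing is needed.

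In summary, the proof is: produce a monomial via Proposition \ref{pr:TPP super toric}, then peel off cluster variables one at a time using primality of $\II$, concluding that some $x_i\in\II$. Because each extraction strictly lowers the degree and $\II$ is proper, the process terminates at a single cluster variable rather than collapsing to a unit. This is why the corollary is labelled ``obvious,'' and I expect the entire argument to occupy only a few lines.
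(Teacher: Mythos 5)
Your proposal is correct and matches the paper's intent exactly: the paper offers no written proof, labelling Proposition \ref{th:intersection} an ``obvious corollary'' of Proposition \ref{pr:TPP super toric}, and the implicit argument is precisely yours---a non-constant monomial $x^{\mathbf{m}}\in\II$ together with ordinary primality of $\II$ (Poisson primality is not needed beyond this) forces some $x_i\in\II$. Your attention to the two minor points---that the exponents are non-negative so the factorization stays in $\CC[x_1,\ldots,x_n]\subset\AA$, and that properness of the prime $\II$ rules out the monomial being a unit---is exactly the bookkeeping the paper leaves unsaid.
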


 We also have the following  quantum version of Proposition \ref{pr:TPP super toric}   which is proved  analogously to the classical case.
\begin{proposition} 
\label{pr:TPP super toric q}
Let $({\bf x},B,\Lambda)$ be a quantum seed,  $rank(\Lambda)=n$  and $\II$ a  non-zero two-sided ideal. Then the ideal $\II$ contains a monomial in  $x^m\in \CC_\Lambda[{\bf x}]$. 
\end{proposition}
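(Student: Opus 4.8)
The claim is the quantum analogue of Proposition \ref{pr:TPP super toric}: if $\Lambda$ has full rank $n$ and $\II$ is a nonzero two-sided ideal of the quantum cluster algebra, then $\II$ meets the skew-polynomial ring $\CC_\Lambda[{\bf x}]$ in a monomial. The plan is to mirror the classical argument, replacing the Poisson bracket $\{x_i,-\}$ by the operation of commuting with $x_i$ in the skew-polynomial setting, and replacing the full-rank condition on the Poisson coefficient matrix by the corresponding nondegeneracy of the commutation exponents.

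First I would reduce to elements lying in $\CC_\Lambda[{\bf x}]$ itself. Given $0\ne f\in\II$, write $f$ as a Laurent element of the quantum torus $H_\Lambda^t$; since each $x_i$ is a unit there, multiplying $f$ on the left (say) by a suitable monomial $x_1^{c_1}\cdots x_n^{c_n}$ with $c_i\ge 0$ clears all negative exponents and produces a nonzero element of $\II\cap\CC_\Lambda[{\bf x}]$. Here I must be slightly careful that multiplication by a monomial is a bijection on the quantum torus that sends monomials to (scalar multiples of) monomials, so the number of distinct monomial terms is preserved; this is where the noncommutativity first enters, but it causes no trouble because $x^{\bf e}x^{\bf f}=t^{(\cdot)}x^{\bf e+\bf f}$ is again a scalar times a single monomial.

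Next I would run the minimal-support contradiction exactly as in the classical proof. Suppose $\II\cap\CC_\Lambda[{\bf x}]$ contains no monomial, and pick $f=\sum_{\bf w}c_{\bf w}x^{\bf w}$ in this intersection with the fewest nonzero terms; it has at least two. For two distinct terms indexed by ${\bf w},{\bf w}'$, set ${\bf v}={\bf w}-{\bf w}'$. In the quantum setting, conjugating a monomial $x^{\bf w}$ by $x_i$ multiplies it by $t^{\langle e_i,{\bf w}\rangle_\Lambda}$ where $\langle\cdot,\cdot\rangle_\Lambda$ is the bilinear form given by $\Lambda$; concretely $x_i x^{\bf w}=t^{\sum_j\lambda_{ij}w_j}x^{\bf w}x_i$. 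Because $\Lambda$ has full rank $n$, the linear functional ${\bf u}\mapsto\sum_j\lambda_{ij}u_j$ does not annihilate the nonzero vector ${\bf v}$ for some $i$, so the scalar attached to $x^{\bf w}$ differs from that attached to $x^{\bf w'}$. I would then form the element $t^{a}x_i f - x_i f$-type combination — more precisely the two-sided commutator-style expression $x_if\,x_i^{-1}-t^{a}f$ (or, staying inside $\CC_\Lambda[{\bf x}]$, an appropriate $\CC[t^{\pm1}]$-combination of $x_if$ and $fx_i$) chosen so that the coefficient of $x^{\bf w}x_i$ cancels while that of $x^{\bf w'}x_i$ survives. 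This produces an element of $\II\cap\CC_\Lambda[{\bf x}]$ that is nonzero yet has strictly fewer terms than $f$, contradicting minimality, and forcing $\II\cap\CC_\Lambda[{\bf x}]$ to contain a monomial.

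The main obstacle, and the only real deviation from the commutative case, is bookkeeping the $t$-powers so that the cancellation is genuine and the surviving term has a nonzero $\CC[t^{\pm1}]$-coefficient. In the Poisson proof the eigenvalues of $\{x_i,-\}$ are additive scalars $\lambda_{\bf w}\in\CC$ and one subtracts $cx_if-\{x_i,f\}$; quantally they become multiplicative scalars $t^{\lambda_{\bf w}}$, so the correct move is to scale by these $t$-powers rather than subtract them, and one must verify that $t^{\lambda_{\bf w}}-t^{\lambda_{\bf w'}}$ is a nonzero element of $\CC[t^{\pm1}]$ — which it is, since the exponents differ by the full-rank argument and $t$ is a unit that is not a root of unity in $\CC[t^{\pm1}]$. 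Once this is checked the term count strictly drops and the induction closes; the two-sided hypothesis guarantees both $x_if$ and $fx_i$ lie in $\II$, which is what lets me build the cancelling combination.
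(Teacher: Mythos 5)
Your proposal is correct and takes essentially the same route as the paper: the paper gives no separate argument for Proposition \ref{pr:TPP super toric q}, stating only that it is proved analogously to Proposition \ref{pr:TPP super toric}, and your write-up is precisely that analogue --- clearing denominators via the quantum Laurent phenomenon, then using the full rank of the integer matrix $\Lambda$ to find $i$ with $x_i x^{\bf w}=t^{\sum_j\lambda_{ij}w_j}x^{\bf w}x_i$ separating two terms, and forming the $\CC[t^{\pm 1}]$-combination of $x_if$ and $fx_i$ (both in $\II$ by two-sidedness) to strictly reduce the number of monomial terms. Your extra care with the multiplicative $t$-power bookkeeping, in particular that $t^{a}-t^{b}\ne 0$ in $\CC[t^{\pm 1}]$ for distinct integer exponents, is exactly the point where the quantum case differs from the Poisson one, and you handle it correctly.
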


  \begin{remark}
 We do not have a  quantum version of Proposition \ref{th:intersection} because we do not know whether prime ideals in quantum cluster algebras are completely prime.
 \end{remark}

\section{Poisson ideals in acyclic cluster algebras}
\label{se:Poisson and symp geom}
In this section we recall results from our previous paper \cite{ZW tpc}. As the proofs   are rather short, we shall include them for convenience.
Recall e.g.~from \cite{BFZ} that acyclic cluster algebras associated with an acyclic quiver and with trivial coefficients correspond, up to a reordering of the variables   of the acyclic seed, to cluster algebras defined by a seed $({\bf x},B)$ where $B$ is a skew-symmetric $n\times n$-matrix with $b_{ij}>0$ if $i<j$. 
 
Berenstein, Fomin and Zelevinsky proved in \cite{BFZ} that such a  cluster algebra $\AA$ is equal to both its lower and upper bounds. Thus, it is  Noetherian and, if $B$ has full rank, a Poisson algebra with the Poisson brackets given by compatible  pairs  $(B,\Lambda)$ with $\Lambda=\mu B^{-1}$ for certain $\mu\in \ZZ$ (see Example \ref{ex:acyclic}). In order for $B$ to have full rank we have to assume that $n=2k$ is even.   Let $P_i=m_i^+ +m_i^-$ where $m_i^+$ and $m_i^-$ denote the monomial terms in the exchange  polynomial. Then $\{y_i,x_i\}=\mu_1m_i^+ +\mu_2m_i^-$ for some $\mu_1,\mu_2\in \ZZ$.  We, additionally, want to require that $\mu_1\ne \mu_2$. To assure this, we assume that
\begin{equation}
\label{eq:poisson gen}
\sum_{j=1}^n (b^{-1})_{ij}\left(max(b_{ij},0)+min(b_{ij},0)\right)\ne 0\ 
\end{equation}  
for all $i\in [1,n]$. We have the following result.
\begin{theorem}\cite{ZW tpc}
\label{th:acyclic}
Let $\AA$ be an acyclic cluster algebra over $\CC$ with trivial coefficients of even rank $n=2k$, given by a seed $(x_1,\ldots, x_{n}, B)$ where $B$ is a skew-symmetric $n\times n$-integer matrix satisfying  $b_{ij}>0$ if $i<j$ and suppose that  $B$ and $B^{-1}$ satisfy Equation \ref{eq:poisson gen} for each $i\in[1,n]$. Then, the Poisson cluster algebra  defined by a compatible pair $(B,\Lambda)$ where $\Lambda=\mu B^{-1}$  with  $0\ne \mu\in \ZZ$ contains no non-trivial Poisson prime ideals.
\end{theorem}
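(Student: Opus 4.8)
The plan is to assume that $\II$ is a nonzero Poisson prime ideal and derive a contradiction by propagating membership of cluster variables ``upward'' through the exchange relations until the unit is forced into $\II$. First I would note that since $B$ has full rank and $\Lambda=\mu B^{-1}$, the coefficient matrix $\Lambda$ is invertible, so $\mathrm{rank}(\Lambda)=n$; hence Proposition \ref{th:intersection} applies and guarantees that $\II$ already contains at least one cluster variable $x_i$. The whole argument then consists of turning this single membership into a cascade, combining the ring structure (the exchange relation $x_iy_i=P_i$) with the Poisson structure (the fact that $\II$ is a Poisson ideal).

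The key local step is the evaluation of $\{y_i,x_i\}$. Writing $y_i=x_i^{-1}P_i$ with $P_i=m_i^++m_i^-$ and using that the bracket is log-canonical on $\CC[{\bf x}]$, I would obtain $\{y_i,x_i\}=\mu_1 m_i^+ +\mu_2 m_i^-$, where $\mu_1=\sum_k \max(b_{ik},0)\,\lambda_{ki}$ and $\mu_2=\sum_k(-\min(b_{ik},0))\,\lambda_{ki}$ with $\lambda_{ki}=\mu(B^{-1})_{ki}$. Using that $B^{-1}$ is skew-symmetric one checks that $\mu_1-\mu_2=-\mu\sum_k (B^{-1})_{ik}\big(\max(b_{ik},0)+\min(b_{ik},0)\big)$, which is exactly $-\mu$ times the left-hand side of \eqref{eq:poisson gen} and hence nonzero; this is precisely where the hypothesis enters and forces $\mu_1\ne\mu_2$. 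Now if $x_i\in\II$, then on the one hand $x_iy_i=m_i^++m_i^-\in\II$, and on the other hand $\{y_i,x_i\}=\mu_1 m_i^++\mu_2 m_i^-\in\II$, since $\II$ is a Poisson ideal and $y_i\in\AA$ (recall that $\AA$ coincides with its lower bound, so every $y_i$ lies in $\AA$). Because $\mu_1\ne\mu_2$, this $2\times2$ linear system over $\CC$ is invertible, and I conclude that $m_i^+\in\II$ and $m_i^-\in\II$.

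Finally I would invoke acyclicity together with primality to terminate the cascade. Since $b_{ik}>0$ forces $k>i$, the monomial $m_i^+$ is a product of cluster variables $x_k$ with $k>i$ only; as $\II$ is prime, $m_i^+\in\II$ means that either $m_i^+=1$ or some $x_k\in\II$ with $k>i$. Let $j$ be the largest index with $x_j\in\II$, which exists by the first step. Then $m_j^+\in\II$, and there are two cases: if $m_j^+=1$ we get $1\in\II$, contradicting that a prime ideal is proper; otherwise $m_j^+$ involves some $x_k$ with $k>j$, and primality yields $x_k\in\II$, contradicting the maximality of $j$. Either way we reach a contradiction, so no nonzero Poisson prime ideal can exist. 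The main obstacle, and the only place demanding genuine computation, is the explicit evaluation of $\{y_i,x_i\}$ and the identification of $\mu_1-\mu_2$ with the expression in \eqref{eq:poisson gen}; everything downstream is a clean combination of this \emph{isolation trick} with the monotonicity built into the acyclic ordering.
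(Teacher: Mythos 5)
Your proof is correct and takes essentially the same route as the paper: Proposition \ref{th:intersection} to force a cluster variable into $\II$, the pair $x_iy_i=P_i$ and $\{y_i,x_i\}=\mu_1m_i^++\mu_2m_i^-$ together with \eqref{eq:poisson gen} to isolate the monomials $m_i^{\pm}$, and primality plus the acyclic ordering to terminate in a contradiction. The only immaterial differences are that you run the cascade upward from the \emph{maximal} index in $\II$ using $m_j^+$ where the paper descends from the \emph{minimal} index using $m_{i_1}^-$, and that you spell out the computation identifying $\mu_1-\mu_2$ with $-\mu$ times the left-hand side of \eqref{eq:poisson gen}, which the paper leaves implicit.
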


\begin{proof}
 Suppose that there exists a non-trivial  Poisson prime ideal $\II$. Then, $\II\cap {\bf x}$ is nonempty by Proposition \ref{pr:TPP super toric}, hence $\II\cap {\bf x}=\{x_{i_1},\ldots, x_{i_j}\}$ for some $1\le i_1\le i_2\le \ldots\le i_j\le 2k$.  Additionally, observe that   $P_{i_1}=m_{i_1}^++m_{i_1}^-$ has to be contained in $\II$, as well as 
 $$\{y_{i_1},x_{i_1}\}=\mu_1m_{i_1}^+ +\mu_2m_{i_1}^-\ .$$ 
 By our assumption, we have $\mu_1\ne \mu_2$, and therefore $m_{i_1}^-\in \II$.   Hence, $x_h\in\II$ for some $h\in[1, i_{1}-1]$  or $1\in \II$. But  $b_{i,h}<0$ implies that $h<i$  and we obtain the desired contradiction. The theorem is proved.
\end{proof}

   The theorem has the following corollary which was also independently proved by Muller very recently \cite{Mu 1}, though in more generality.
\begin{corollary}
\label{co:acyclic smooth}
 Let $\AA$ be as in Theorem \ref{th:acyclic}. Then, the variety $X$ defined by $\AA=\CC[X]$ is smooth. 
\end{corollary}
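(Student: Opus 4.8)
The plan is to deduce smoothness of $X$ from the structural result in Theorem~\ref{th:acyclic}, namely that the Poisson cluster algebra $\AA = \CC[X]$ has no non-trivial Poisson prime ideals. The key conceptual point is that over $\CC$, with $\Lambda = \mu B^{-1}$ of full rank $n$, the associated Poisson bracket is symplectic on the open torus where all cluster variables are invertible, and the absence of non-trivial Poisson primes forces the singular locus (which is a Poisson-closed subset) to be empty.

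First I would recall that the singular locus $X_{\mathrm{sing}} \subset X$ is a closed subvariety defined by a radical ideal $\II_{\mathrm{sing}} \subset \AA$, and that this ideal is intrinsically characterized (via the Jacobian criterion) independently of any Poisson structure. The crucial lemma to invoke is that the singular locus of a Poisson variety over a field of characteristic zero is itself a Poisson subvariety; equivalently, $\II_{\mathrm{sing}}$ is a Poisson ideal. This is a standard fact (the Hamiltonian vector fields preserve the singular locus because they are algebraic automorphisms infinitesimally, hence preserve smoothness), and I would either cite it or sketch the argument that $\{\AA, \II_{\mathrm{sing}}\} \subset \II_{\mathrm{sing}}$.

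Next I would argue by contradiction: suppose $X$ is not smooth, so $\II_{\mathrm{sing}} \ne \{0\}$ is a proper Poisson ideal. Since $\AA$ is Noetherian (this is guaranteed by the acyclic hypothesis, as recalled before Theorem~\ref{th:acyclic}), $\II_{\mathrm{sing}}$ is contained in some minimal prime over it, and because $\II_{\mathrm{sing}}$ is radical and Poisson, one of its minimal associated primes $\pp$ is again a Poisson ideal — here I would use that in characteristic zero the minimal primes over a Poisson ideal are themselves Poisson. Thus $\pp$ is a non-trivial Poisson prime ideal of $\AA$, directly contradicting Theorem~\ref{th:acyclic}. Therefore $\II_{\mathrm{sing}} = \{0\}$, which forces $X_{\mathrm{sing}} = \emptyset$ (as $X$ is reduced, being the spectrum of a domain sitting inside the field $\FF$), and $X$ is smooth.

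The main obstacle I anticipate is justifying cleanly that the minimal primes over a Poisson radical ideal inherit the Poisson property; this requires the characteristic-zero hypothesis and is the one place where a genuine lemma (rather than a formal manipulation) is needed. A secondary subtlety is confirming that $\II_{\mathrm{sing}}$ really is a Poisson ideal for this particular bracket — one should check that the full-rank condition $\mathrm{rank}(\Lambda) = n$ does not interfere, but since Poisson-invariance of the singular locus holds for any Poisson structure in characteristic zero, this should go through without modification. Everything else is a short Noetherian-reduction argument, so I expect the proof to be only a few lines once these standard facts are in place.
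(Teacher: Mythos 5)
Your proposal is correct and follows essentially the same route as the paper: the paper's proof likewise observes that the singular locus is cut out by a Poisson ideal (citing Polishchuk \cite{Pol}) which, being proper, would have to lie inside a proper Poisson prime ideal, contradicting Theorem \ref{th:acyclic} --- your lemma that minimal primes over a Poisson ideal are again Poisson in characteristic zero is exactly the standard fact the paper invokes there. One cosmetic slip worth fixing: the conclusion should be that $\II_{\mathrm{sing}}$ is the \emph{unit} ideal (equivalently $X_{\mathrm{sing}}=\emptyset$), not $\II_{\mathrm{sing}}=\{0\}$, since the zero ideal would cut out all of $X$; your contradiction argument already yields smoothness, so this misstatement does not affect the proof.
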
   
   \begin{proof}
   The singular subset is contained in  a Poisson ideal of co-dimension greater or equal to one by a result of Polishchuk (\cite{Pol}). It is well known that  the Poisson  ideal must be contained in a proper Poisson prime ideal (see also \cite{ZW tpc}). 
   The assertion follows.
   \end{proof}
   
\begin{remark}
The assumption that the cluster algebra has even rank is very important. Indeed, Muller has recently shown that the variety corresponding to the cluster algebra of type $A_3$ has a singularity (\cite[Section 6.2]{Mu}).
\end{remark}

\subsection{Symplectic Structure}
\subsubsection{Symplectic geometry of Poisson varieties}
In this section we will recall some well-known properties  of  the symplectic structure on Poisson varieties. Our discussion follows along the lines of \cite[Part III.5]{brown-goodearl}. If $A$ is a Poisson algebra over a field $k$, then each $a\in A$ defines a derivation $X_a$ on $A$ via
$$X_a(b)=\{a,b\}\ .$$
This derivation is called the {\it Hamiltonian vectorfield} of $a$ on $A$.

Now suppose that $A$ is the coordinate ring of an affine complex variety $Y$.
We will associate to the Poisson bracket the {\it Poisson bivector} $u\in \Lambda^2 T(Y)$ where $T(Y)$ denotes the tangent bundle of $Y$. 
Let $p\in Y$ be a point and $\mm_p\subset A$ the corresponding maximal ideal. Let $\alpha,\beta\in \mm_p/\mm_p^2$ be elements of the cotangent space and let $f,g$ be lifts of $\alpha$ and $\beta$, respectively. We define $u_p\in \Lambda^2 T_p(Y)$
$$u_p(\alpha,\beta)=\{f,g\}(p)\ .$$   
Note that $u_p$ is a well-defined skew-symmetric form. Indeed, if $I\subset A$ is an ideal and $b\in \II^2$, then for all $a\in A$
$$ \{a,b\}\in \II\ $$
by the Leibniz rule.  The form $u_P$ may be degenerate, indeed if it is non-degenerate at every point $p\in Y$, then we call $u_P$ symplectic and, moreover, if  $Y$ is connected, then $Y$ is smooth and a (holomorphic) symplectic manifold.   Define 
$$N(p)=\{\alpha \in T^*_p Y:u_p(\alpha,\cdot)=0\}\ ,$$
and $H(p)\subset T_p Y$ its orthogonal complement.  The space $H(P)$ is the tangent space  of the linear span of the Hamiltonian vectorfields at $p$. Recall that by the Theorem of Frobenius, a Poisson variety $Y$ decomposes as a disjoint union of symplectic leaves, maximal symplectic submanifolds. The tangent space of the symplectic leaf at the point $p$ is  $H(p)$.  
 \subsubsection{The Main Theorem}
 Corollary \ref{co:acyclic smooth} implies that $X$ is smooth, hence it has the structure of  a complex manifold.  We  have the following result. 
 \begin{theorem}
 \label{th:symplectic leaves}
 Let $\AA$ be an acyclic cluster algebra as defined above, $X$ an affine variety such that $\AA=\CC[X]$ and let $\Lambda$ define a compatible Poisson bracket. Then, $X$ is a holomorphic symplectic manifold.
 \end{theorem}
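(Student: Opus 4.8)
The plan is to verify the three conditions spelled out in the paragraph preceding the statement: that $X$ is smooth, that it is connected, and that the Poisson bivector $u$ is non-degenerate at every point. Smoothness is already furnished by Corollary \ref{co:acyclic smooth}, so $X$ is a complex manifold, and connectedness is immediate, since $\AA\subset \FF$ is a domain and hence $X=\mathrm{Spec}\,\AA$ is irreducible. Thus the entire content of the theorem reduces to showing that the skew form $u_p$ is non-degenerate for every $p\in X$: once this is known, $u$ is symplectic, and $X$ being smooth and connected, the criterion recalled above identifies $X$ as a holomorphic symplectic manifold.

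First I would establish non-degeneracy on a dense open subset. The torus $(\CC^\ast)^n=\mathrm{Spec}\,\CC[x_1^{\pm1},\ldots,x_n^{\pm1}]$ is an open dense subvariety of $X$, since $\AA$ and the Laurent ring share the field of fractions $\FF$. On this torus the bracket is log-canonical, $\{x_i,x_j\}=\lambda_{ij}x_ix_j$, so in the frame $\partial_{x_1},\ldots,\partial_{x_n}$ the bivector at a point $p$ with coordinates $a_i=x_i(p)\ne 0$ is represented by the matrix $(\lambda_{ij}a_ia_j)=D\Lambda D$ with $D=\mathrm{diag}(a_1,\ldots,a_n)$. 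Because $\Lambda=\mu B^{-1}$ with $B$ invertible, $\Lambda$ has full rank $n$, and $D$ is invertible, so $u_p$ has rank $n$ everywhere on the torus. In particular the generic rank of $u$ on the irreducible variety $X$ equals $n=\dim X$.

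It remains to rule out degeneracy on the complement. Let $Z\subset X$ be the locus where $\mathrm{rank}\,u_p<n$; equivalently, since $n=2k$, the vanishing locus of the section $u^{\wedge k}$ of $\wedge^{n}TX$. By the previous paragraph $Z$ is a proper closed subvariety. The key point, and the step I expect to be the main obstacle, is that $Z$ is a \emph{Poisson} subvariety: it is precisely the union of the symplectic leaves of dimension strictly less than $2k$, and since Hamiltonian vector fields are tangent to the leaves, any $f$ vanishing on $Z$ satisfies $\{a,f\}=X_a(f)\in I(Z)$ for all $a\in\AA$, so that $I(Z)$ is a Poisson ideal. If $Z$ were non-empty, then $I(Z)$ would be a proper Poisson ideal which is non-zero because $Z\ne X$, and by the argument used in the proof of Corollary \ref{co:acyclic smooth} it would be contained in a proper Poisson prime ideal, contradicting Theorem \ref{th:acyclic}. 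Hence $Z=\emptyset$, $u_p$ is non-degenerate at every point, and the theorem follows. The delicate point to make fully rigorous is the semicontinuity/leaf description identifying $Z$ with a closed Poisson subscheme; an alternative, more algebraic route is to exhibit explicit generators of $I(Z)$ as Pfaffian-type minors of the bracket matrix and check directly that the ideal they generate is Poisson.
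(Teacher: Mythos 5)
Your proposal is correct, but it handles the crux of the theorem by a genuinely different route than the paper. The common part is the torus computation: the paper also begins with a generic point (all $x_i(p)\neq 0$) and observes that the Hamiltonian fields $X_{x_i}(p)=(\lambda_{1i}p_i,\ldots,\lambda_{ni}p_i)$ span $T_pX$ because $\Lambda$ is invertible, which is your $D\Lambda D$ argument. The divergence is at the non-generic points. The paper argues directly and inductively: at a point $p$ with $p_i=0$ and $p_j\neq 0$ for all $j<i$, it uses the exchange relation $x_iy_i=m_i^++m_i^-$ together with condition \eqref{eq:poisson gen} (which forces $\mu_1\neq\mu_2$ in $\{x_i,y_i\}=\mu_1m_i^++\mu_2m_i^-$) and acyclicity ($b_{ih}<0\Rightarrow h<i$, so $m_i^-(p)\neq 0$) to conclude $\{x_i,y_i\}(p)\neq 0$; hence the leaf through $p$ is not tangent to the hypersurface $x_i=0$, every leaf has dimension $n$, and connectedness then forces a single leaf. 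You instead pass through the degeneracy locus $Z=\{u^{\wedge k}=0\}$: you show $I(Z)$ is a nonzero Poisson ideal, embed it in a proper Poisson prime, and contradict Theorem \ref{th:acyclic} --- exactly the mechanism the paper itself uses in Corollary \ref{co:acyclic smooth} to kill the singular locus via Polishchuk's result. Your route is shorter and more conceptual, since the exchange-relation combinatorics enters only once, inside the already-proven Theorem \ref{th:acyclic}; its one genuine debt is the Poisson-ness of $I(Z)$, which you correctly flag. That step does close: since $X$ is smooth of even dimension $n=2k$, $u^{\wedge k}$ is a section of the line bundle $\Lambda^nTX$, and $L_{X_a}u=0$ gives locally $X_a(\mathrm{Pf})=\mathrm{div}(X_a)\,\mathrm{Pf}$, so the Pfaffian ideal is Poisson (and in characteristic zero its radical and minimal primes are too). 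What the paper's hands-on argument buys in exchange is explicit geometric information --- the symplectic leaves are transverse to each coordinate hypersurface $x_i=0$ --- and independence from the semicontinuity/rank-stratification input; note also that the paper's induction on the set of vanishing coordinates is itself slightly delicate (it needs a nearby point with $x_i\neq 0$ \emph{and} $x_j\neq 0$ for $j<i$ inside the same leaf), so neither proof is entirely free of fine print.
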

 \begin{proof}
 First, let $p\in X$ be a  generic point, by which we mean that $x_i(p)\ne 0$ for all $i=[1,\ldots,n]$. Set $x_i(p)=p_i$. It is easy to see that the Hamiltonian vectorfield
 $X_{x_i}$ at $p$ evaluates in the local coordinates $(x_1,\ldots, x_n)$ as
$$ X_{x_i}(p)= (\lambda_{1i} p_i,\ldots, \lambda_{ni} p_i)\ ,$$
where $\Lambda=(\lambda_{ij})_{i,j=1}^n$.

Since, $\Lambda$ is non-degenerate, we obtain that the Hamiltonian vectorfields span the tangent space $T_p X$ at $p$. It remains to consider the case when $p_i=0$ for some $i\in[1,n]$. Suppose that $p\in X$ such that $p_i=0$ and $p_j\ne 0$ for all $j<i$.  We have to show that the symplectic leaf containing $p$ is not contained in the hyper-surface $x_i=0$. We may assume, employing  induction,  that if $p_1,\ldots, p_i\ne 0$ then the symplectic leaf at $p$ has full rank.  We now claim that 
$\{x_i,y_i\}(p)\ne 0 $. Indeed, suppose that $\{x_i,y_i\}(p)=(\mu_1 m^+\mu_2 m^-)(p)=0$. Since $p_i=0$ implies that $(m^++m^-)(p)=0$, we would conclude as in the proof of Theorem \ref{th:acyclic} that $m^+(p)=m^-(p)=0$, but that is a contradiction to our assumption that $p_j\ne 0$ for all $j<i$.  Denote by $u\in  \Lambda^2 T(Y)$ the Poisson bivector.  We obtain that  $u_p(\frac{\delta}{\delta x_i},\cdot)\ne 0$, hence the symplectic leaf containing $p$ is not tangent to the hypersurface $x_i=0$ at  $p$.  It must contain  a point in an analytic  neighborhood of $p$ at which $x_i(p)\ne 0$ and $x_j(p)\ne 0$ for all $j<i$ by our assumption. We obtain the desired contradiction and, hence,  every symplectic leaf has dimension $n$. But the manifold $X$ is connected and, therefore,  cannot be a union of disjoint open submanifolds of equal dimension, hence $X$  contains only one symplectic leaf and the  theorem is proved.    

 \end{proof}
 \begin{remark}
 This result can be easily generalized to acyclic cluster algebras with invertible coefficients (using Remark \ref{re:Class Poisson}). This would imply that in the set-up of locally acyclic cluster algebras it should be easy to show that the spectrum of a locally acyclic cluster algebra is a holomorphic symplectic manifold. 
 \end{remark}
 
 \section{Ideals in Acyclic Quantum Cluster Algebras}
 \label{se:ideals in qca}
 \begin{theorem}
 \label{th:ideals in qclalg}
 Let $\AA_q$ be a quantum cluster algebra with quantum seed $({\bf x}, B,\Lambda)$ satisfying Equation \ref{eq:poisson gen}. Then, $\AA_q$ does not contain any non-trivial proper prime ideals.
 \end{theorem}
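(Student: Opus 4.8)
The plan is to mimic the structure of the classical argument in Theorem \ref{th:acyclic}, replacing Poisson prime ideals by two-sided prime ideals and invoking the quantum intersection result Proposition \ref{pr:TPP super toric q} in place of Proposition \ref{pr:TPP super toric}. First I would suppose, for contradiction, that there exists a non-trivial proper prime ideal $\II \subset \AA_q$. Since $(B,\Lambda)$ is a compatible pair with $B$ invertible of even rank $n$, Lemma \ref{le:full rank} gives $\mathrm{rank}(\Lambda)=n$, so Proposition \ref{pr:TPP super toric q} applies and guarantees that $\II \cap \CC_\Lambda[{\bf x}]$ contains a monomial $x^m$. Because the generators $x_1,\ldots,x_n$ $q$-commute, any monomial is, up to a unit power of $t$, an ordered product $x_1^{m_1}\cdots x_n^{m_n}$; as $\II$ is prime and the $x_i$ generate a $q$-commuting family, primeness forces some individual cluster variable $x_i$ to lie in $\II$. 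Let $i_1$ be the smallest index with $x_{i_1}\in\II$.

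Next I would carry the quantum exchange relation into $\II$. The exchange relation $x_{i_1} y_{i_1} = P_{i_1} = m_{i_1}^+ + m_{i_1}^-$ (interpreted in the quantum seed, where $y_{i_1}=M_{i_1}(e_{i_1})$ and the two terms are the quantum monomials $M(-e_{i_1}+\sum_{b>0}b_{i,i_1}e_i)$ and $M(-e_{i_1}-\sum_{b<0}b_{i,i_1}e_i)$) shows that $P_{i_1}\in\II$, since the left side lies in $\II$. The key step is to produce a \emph{second} element of $\II$ that is a $q$-weighted combination of $m_{i_1}^+$ and $m_{i_1}^-$ with coefficients genuinely different from those in $P_{i_1}$. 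In the classical proof this came from the Poisson bracket $\{y_{i_1},x_{i_1}\}=\mu_1 m_{i_1}^+ + \mu_2 m_{i_1}^-$ with $\mu_1\ne\mu_2$ guaranteed by Equation \eqref{eq:poisson gen}. The quantum replacement is the commutator (or the relation $x_{i_1}y_{i_1}$ versus $y_{i_1}x_{i_1}$): since $x_{i_1}$ and the two quantum monomials $m_{i_1}^\pm$ satisfy $x_{i_1} m_{i_1}^{\pm} = t^{\,\alpha_\pm} m_{i_1}^{\pm} x_{i_1}$ for integers $\alpha_+,\alpha_-$ read off from $\Lambda$, one computes $y_{i_1}x_{i_1} = t^{\alpha_+} m_{i_1}^+ + t^{\alpha_-} m_{i_1}^-$. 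Equation \eqref{eq:poisson gen} (applied to $\Lambda=\mu B^{-1}$) is exactly the condition that $\alpha_+\ne\alpha_-$, so $x_{i_1}y_{i_1}$ and $y_{i_1}x_{i_1}$ are $\CC[t^{\pm1}]$-independent combinations of $m_{i_1}^+$ and $m_{i_1}^-$. Both lie in the two-sided ideal $\II$, so a suitable $\CC[t^{\pm1}]$-linear combination isolates $m_{i_1}^-\in\II$ (the monomial built from the negative entries of row $i_1$).

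Finally, $m_{i_1}^-$ is (again up to a unit) a monomial $\prod_{b_{i_1,k}<0} x_k^{-b_{i_1,k}}$; by primeness of $\II$, some variable $x_h$ with $b_{i_1,h}<0$ lies in $\II$, or else $m_{i_1}^-$ is a unit, forcing $1\in\II$ and contradicting properness. But in the acyclic seed $b_{i_1,h}<0$ forces $h<i_1$, contradicting the minimality of $i_1$. This contradiction shows no such $\II$ exists, proving the theorem. I expect the main obstacle to be the step extracting an individual cluster variable (and later $x_h$) from a monomial lying in a prime ideal: in the noncommutative setting one must argue that a two-sided prime ideal containing a $q$-monomial $x_1^{m_1}\cdots x_n^{m_n}$ contains one of the factors $x_k$. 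This requires checking that the ordered monomials form a normalizing (Ore) family so that the usual ``prime ideal containing a product contains a factor'' reasoning survives; it is precisely here that we rely on the $q$-commutation relations rather than on the ideal being completely prime, which is why the Remark after Proposition \ref{pr:TPP super toric q} cautions against a naive quantum analogue of Proposition \ref{th:intersection}.
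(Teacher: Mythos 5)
Your skeleton matches the paper's strategy point for point: get a monomial in $\II$ from Proposition \ref{pr:TPP super toric q}, play $x_iy_i$ against $y_ix_i$, use \eqref{eq:poisson gen} to see these are independent $q$-weighted combinations of $m_i^+$ and $m_i^-$, and close with acyclicity. Your reading of \eqref{eq:poisson gen} as the condition $\alpha_+\ne\alpha_-$ is exactly how the paper uses it. But the step you yourself flag as the main obstacle is a genuine gap, and the repair you propose would fail. Both of your extraction steps --- from the monomial $x_1^{m_1}\cdots x_n^{m_n}\in\II$ to a single cluster variable $x_{i_1}\in\II$, and later from $m_{i_1}^-\in\II$ to a single $x_h\in\II$ --- rest on the claim that the $x_k$ form a normalizing family, so that the usual ``prime ideal containing a product contains a factor'' argument applies. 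That claim is false in $\AA_q$: each $x_i$ is normal in the quantum affine subalgebra $\CC_\Lambda[{\bf x}]$, but not in $\AA_q$ itself, precisely because $x_i$ does not $q$-commute with $y_i$; indeed $x_iy_i=m_i^++m_i^-$ while $y_ix_i=q^{\alpha_+}m_i^++q^{\alpha_-}m_i^-$ with $\alpha_+\ne\alpha_-$ under \eqref{eq:poisson gen}, so $y_ix_i$ is not a power of $q$ times $x_iy_i$ and $x_i\AA_q\ne\AA_q x_i$. The very hypothesis that powers your separation step destroys the normality your extraction step needs. (Nor can you retreat to $\CC_\Lambda[{\bf x}]$: primeness of $\II$ in $\AA_q$ does not pass to the contraction $\II\cap\CC_\Lambda[{\bf x}]$.) This is exactly why the remark after Proposition \ref{pr:TPP super toric q} says the paper has no quantum analogue of Proposition \ref{th:intersection}: your first step asserts precisely that unavailable statement.

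The paper's proof never extracts a variable. It takes $x^{\bf v}\in\II$ \emph{lexicographically minimal}, writes $x^{\bf v}=x^{\bf v'}x_i^{v_i}$ with $i$ the top index in the support of ${\bf v}$, and then uses the essential input missing from your plan: by \cite[Theorems 7.3 and 7.5]{bz-qclust} the acyclic quantum cluster algebra equals its lower bound, so every $a\in\AA_q$ is a $\CC[q^{\pm1}]$-combination of standard monomials $x^{\bf w}y_i^hy^{\bf w'}$. Since $y_k$ \emph{does} skew-commute with $x_\ell$ for $k\ne\ell$, one checks $x^{\bf v'}ax_i^{v_i}\in\II$ for every such $a$, i.e.\ $x^{\bf v'}\AA_q\,x_i^{v_i}\subseteq\II$; primeness and lex-minimality then force ${\bf v'}=0$, so $x^{\bf v}=x_i^{v_i}$ --- a pure power, not $x_i$ itself. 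Now $x_i^{v_i}y_i$ and $y_ix_i^{v_i}$ both lie in $\II$ and, by \eqref{eq:poisson gen}, are independent combinations of $x_i^{v_i-1}m_i^+$ and $x_i^{v_i-1}m_i^-$, so $\II$ contains the monomial $x_i^{v_i-1}m_i^-$ (the paper prints $m_i^+$ here, but with the acyclic convention $b_{ij}>0$ for $i<j$ it is $m_i^-$, supported on indices below $i$, that is lexicographically smaller, consistent with the classical Theorem \ref{th:acyclic}). This contradicts minimality of ${\bf v}$ outright, with no second extraction from $m_i^-$ needed. If you replace your two factor-extraction steps by this minimal-monomial bookkeeping through the lower-bound presentation, your outline becomes the paper's argument.
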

 
 \begin{proof} Let $\II$ be a prime ideal in $\AA_q$. 
 We obtain from Proposition \ref{pr:TPP super toric q} that  $\II$ contains a monomial $x^{\bf v}$ with ${\bf v}\in\ZZ_{\ge 0}^n$.   It is easy to observe that we can choose  ${\bf v}$ to be  minimal with respect to the lexicographic order on $\ZZ^n_{\ge 0}$. Recall that the lexicographic ordering defines   ${\bf u}<{\bf w}$ for  ${\bf u},{\bf w}\in \ZZ^n$ if and only if there exists $i\in [1,n]$ such that $u_i,w_i$ and $u_j=w_j$ if $j>i$. Recall that this defines a total ordering on $\ZZ^n_{\ge0}$.  There exists some $i\in[1,n]$ such that $v_i\ne 0$ and $v_k=0$ for all $k>i$, and we write $x^{\bf v}=x^{\bf v'} x_i^{v_i}$ with ${\bf v'}={\bf v}-v_i{\bf  e}_i$. 
 
 Recall that an ideal $\II$ in a non-commutative ring $R$ is prime if $arb\in \II$  for all $r\in R$ implies that $a\in \II$ or $b\in \II$. Now, since $\AA_q$ is acyclic, we know from \cite[Theorem 7.3 and 7.5]{bz-qclust} that it is isomorphic to its lower bound. Hence, employing the notation of Definition \ref{def:lower bounds}, each element  $a\in \AA_q$ can be written as  $a=\sum_{p=1}^r c_p x^{\bf w}y_i^h y^{\bf w'}$ where $w_kw'_k=w_i h =0$ and $c_p\in\CC[q^{\pm 1}]$ for all $p\in[1,r]$. Observe that $y_k$ and $x_\ell$ skew-commute if $k\ne \ell$, as they are cluster variables in the cluster ${\bf x}_k$. We now compute that  
 $$  x^{\bf v'} ax_i^{j}=\sum_{p=1}^r  c_p q^{\lambda_a} x^{\bf w} y_i^h x^{\bf v'} x_i^{v_i} y^{w'}= \sum_{p=1}^r  c_p q^{\lambda_p} x^{\bf w} y_i^h  x^{\bf v}  y^{w'} \in \II\ ,$$
 for all $a\in\AA_q$ and certain $\lambda_p\in\ZZ$. Hence, since $\II$ is a prime ideal, $x^{\bf v'}\in \II$ or $x_{i}^{v_i}\in\II$. But ${\bf v'}<{\bf v}$ and $v_i{\bf e}_i\le {\bf v}$, and therefore,   $ x_i^{v_i}=x^{\bf v}\in\II$.  But we have assumed  in Equation \ref{eq:poisson gen} that 
 $x_iy_i$ and $y_ix_i$, are not linearly dependent, hence $x_i^{v_i} y_i$ and $y_i x_i^{v_i}$ are not linearly dependent. As in the proof of Theorem \ref{th:acyclic}, we now argue that    $x_i^{v_i-1 }m_i^{+}\in \II\ $, where $P_i=m_i^++m_i^-$
  (see  Section \ref{se:Poisson and symp geom}).  But this is a contradiction to our minimality assumption. The theorem is proved. 
 \end{proof}
 
 Theorem  \ref{th:ideals in qclalg} and Theorem \ref{th:symplectic leaves} have the following immediate corollary.
 
 \begin{corollary}
 Let $({\bf x}, B,\Lambda)$ be an acyclic Poisson or quantum seed, as defined above. Then 
 the space of primitive ideals in the quantum cluster algebra (one point corresponding to the $0$ ideal) and the space of symplectic leaves (also just one point) are homeomorphic.
 \end{corollary}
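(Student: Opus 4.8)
The plan is to observe that both topological spaces appearing in the statement collapse to a single point, and that any map between one-point spaces is automatically a homeomorphism; thus the only real content is to identify each space as a singleton. The corollary is then truly immediate from the two main theorems, and I would organize the argument as two independent identifications followed by a one-line conclusion.

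First I would treat the primitive spectrum of $\AA_q$. By Theorem \ref{th:ideals in qclalg}, the zero ideal is the only proper prime ideal of $\AA_q$. Since every primitive ideal is in particular a proper prime ideal, the primitive spectrum is contained in $\{\{0\}\}$. To see that it equals this singleton rather than being empty, I would invoke that $\AA_q$ is a nonzero unital ring and hence possesses at least one maximal two-sided ideal; being maximal it is prime, so by Theorem \ref{th:ideals in qclalg} it must coincide with $\{0\}$. Consequently $\AA_q$ is a simple ring, and a simple unital ring is primitive (it acts faithfully on any of its simple modules), so $\{0\}$ is indeed a primitive ideal. Hence the primitive spectrum consists of the single point $\{0\}$, matching the parenthetical remark in the statement.

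Next I would treat the symplectic leaves of $X$. By Theorem \ref{th:symplectic leaves}, $X$ is a connected holomorphic symplectic manifold; equivalently, the Poisson bivector is nondegenerate at every point, so the space $H(p)$ of Hamiltonian directions equals $T_pX$ for all $p$. By the Frobenius decomposition recalled in Section \ref{se:Poisson and symp geom}, the symplectic leaf through $p$ has tangent space $H(p)=T_pX$ and is therefore open; since $X$ is connected and is the disjoint union of its leaves, there can be only one leaf, namely $X$ itself. Hence the space of symplectic leaves is a single point.

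Having identified both spaces as singletons, the unique map between them is a bijection, and both it and its inverse are continuous since every map out of a one-point space is continuous; therefore the two spaces are homeomorphic, as claimed. The only step that is not entirely formal—and hence the main (albeit minor) obstacle—is ruling out the empty primitive spectrum, i.e.\ upgrading \emph{``$\{0\}$ is the only proper prime''} to \emph{``$\{0\}$ is primitive''}; this is exactly what the simplicity argument above supplies, and it is the one place where one uses more than the bare statement of Theorem \ref{th:ideals in qclalg}.
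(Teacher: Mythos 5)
Your proof is correct and takes essentially the same route the paper intends: the paper offers no argument at all, stating the corollary as an ``immediate'' consequence of Theorem \ref{th:ideals in qclalg} and Theorem \ref{th:symplectic leaves}, with both spaces being singletons exactly as you identify them. Your extra step upgrading ``$\{0\}$ is the only proper prime ideal'' to ``$\{0\}$ is primitive'' (via maximal ideals and simplicity), which rules out an empty primitive spectrum, is a correct detail that the paper leaves implicit in its parenthetical remarks.
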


\end{document}